\newtheorem{theorem}{Theorem}[section]
\newtheorem{proposition}[theorem]{Proposition}
\newtheorem{corollary}[theorem]{Corollary}
\newtheorem{question}[theorem]{Question}
\newtheorem{df}{Definition}
\newtheorem{problem}[df]{Problem}
\theoremstyle{definition}
\newtheorem{example}[df]{Example}
\newcommand{\N}{\mathbb N}
\newcommand{\R}{\mathbb R}
\newcommand{\ve}{\varepsilon}
\newcommand{\inte}{\mathrm{int}\,}
\title{Achievement sets of series in $\R^2$}
\author{Mateusz Kula, Piotr Nowakowski}
\address{Faculty of Science and Technology\\
University of Silesia in Katowice\\
Bankowa 14 \\
40-007 Katowice\\
Poland\\
ORCID: 0000-0002-2811-1744}
\email{mateusz.kula@us.edu.pl}
\address{Faculty of Mathematics and Computer Science
\\University of \L\'{o}d\'{z}
\\Banacha 22,
90-238 \L\'{o}d\'{z}
\\Poland
\\ORCID: 0000-0002-3655-4991}
\email{piotr.nowakowski@wmii.uni.lodz.pl}
\subjclass[2020]{11B99, 40A30, 51M15, 54F65} 
\keywords{achievement sets, Cantor sets, Cantorvals, center of distances, spectre of a set}
\begin{document}
\begin{abstract}
We examine properties of achievement sets of series in $\mathbb{R}^2$. We show several examples of unusual sets of subsums on the plane. We prove that we can obtain any set of $P$-sums as a cut of an achievement set in $\mathbb{R}^2.$ We introduce a notion of the spectre of a set in an Abelian group, which generalizes the notion of the center of distances. We examine properties of the spectre and we use it, for example, to show that the Sierpiński carpet is not an achievement set of any series. 
\end{abstract}
\maketitle

\section{Introduction}

In a Banach space, given a sequence $x = (x_n)$ such that the series $\sum_n x_n$ is absolutely
convergent, the following set (of all subsums) is well defined:
$$
E(x):= \left\{\sum_{n=1}^\infty \ve_n x_n \colon (\ve_n) \in\{0,1\}^\N\right\}.
$$
It is called the \textit{achievement set} of the series $\sum_n x_n$ (or the sequence $(x_n)$).
 The investigations of achievement sets were initiated over a century ago by Kakeya in \cite{Ka}. His main goal was to find all possible topological types of achievement sets in $\R$. He conjectured that such sets may have three possible forms: a finite set, a homeomorphic copy of the Cantor set or a finite union of closed intervals. This occurred to be false. In {\cite{GN}} Guthrie and Nymann proved that achievement sets of absolutely convergent series on the real line have one of four possible forms (the proof was later corrected in \cite{NS0}). Apart from the types mentioned by Kakeya, the achievement set can be a set that was later named an M-Cantorval (and now is often called just a Cantorval). 

 The name M-Cantorval first appeared in  the paper \cite{MO} devoted to arithmetic sums of Cantor sets on the real line. We formulate their definition in equivalent form: a non-empty compact set $M\subset \R$ is called an \textit{M-Cantorval}, whenever it is regularly closed and both endpoints of any maximal interval $I\subset M$ are accumulation points of $M\setminus I$, compare \cite[Theorem 21.17]{BFPW1}.
 The classical example of an M-Cantorval is the set that we obtain, when we add to the classical ternary Cantor set all open intervals that are removed in odd steps of the construction of this set. All M-Cantorvals are homeomorphic, see \cite{BFPW1}.

Any nonempty, totally disconnected, perfect and compact set $C \subset \R^n$, where $n \in \N$, is homeomorphic to the classical ternary Cantor set (see \cite[Corollary 30.4]{Wil})
and we call such a set simply a Cantor set.

Achievement sets are strongly connected with research concerning algebraic sums or differences of Cantor sets (e.g. \cite{AI}, \cite{FF}, \cite{FN}), which are important also in other areas, for example in the studies of homoclinic bifurcations of dynamical systems (see \cite{PT}). The set of subsums of a series may be also seen as a range of some purely atomic measure (see \cite{recover}, \cite{F}). Many other interesting papers devoted to achievement sets on the line were published in recent years (e.g. \cite{BBFS}, \cite{BFPW1}, \cite{J}).
There are still many interesting open questions in this area. We, however, focus on achievement sets on the plane. There are a few papers on this topic (see \cite{BG}, \cite{M}, \cite{M2}, \cite{BGM}). Nevertheless, research on achievement sets of series in $\R^2$ is significantly less developed than in $\R$. The main reason is that there are a lot of difficulties, which do not occur on the line, but they appear on the plane.
One of the natural problems is to find a full classification of all topological types of sets of subsums in $\R^2$ analogous to that of Guthrie and Nymann on the line. In this paper we will show why it is far more difficult. 



The paper is organized as follows. In the next section we make some easy observations and we give some general facts concerning achievement sets on the plane. In the third section we introduce the notion of topological type for some achievement sets and give examples of achievement sets which are of the same type, but have different topological structure. In the fourth section we prove the main theorem of the paper which helped us find examples of new topological types of sets of subsums in $\R^2$. The final section is devoted to a new notion of the spectre of a set, which can be used to examine achievement sets.  

We finish this section by introducing some notation, which is used throughout the paper.

Let $C \subset \R$. Any 
component of 
the set $\R \setminus C$ is called a \textit{gap} of $C$. A component of $C$ is called \textit{proper}, if it is not a one-point set.

Let us recall the definitions of other types of Cantorvals (see \cite{MO}).
A perfect set $A \subset \R$ is called an L-Cantorval (respectively an R-Cantorval), if it has infinitely many gaps, the left (right) endpoint of any gap is accumulated by gaps and proper components of $A$, and the right (left) endpoint of any gap is an endpoint of a proper component of $A$.

For any finite set $A$ by $|A|$ we denote the cardinality of $A$.

Every sequence
$$(s_1,s_2,\dots,s_k;q):=(s_1q,s_2q,\dots, s_kq,s_1q^2,s_2q^2, \dots, s_kq^2,\dots),$$
where $ k \in \N$, $s_1, s_2, \dots, s_k\in \R$, $q \in (0,1)$,
is called a multigeometric sequence.

For $A \subset \R^2$, $a\in \R$ we set $pr_x(A): = \{x \in \R\colon \exists_{y\in \R}\,\, (x,y) \in A\},$
$pr_y(A): = \{y \in \R\colon \exists_{x\in \R}\,\, (x,y) \in A\},$
$A_a := \{x \in \R\colon (x,a) \in A\},$ $A^a := \{y \in \R\colon (a,y) \in A\}.$

We say that a point $a \in E(x)$ has $k$ \textit{representations}, if there exist exactly $k$ different sets $A \subset \N$ such that $a = \sum_{i\in A} x_i.$

\section{Simple properties}
Let us start with some simple observations regarding the achievement sets on the plane. They were also made in \cite{BG}.
\begin{proposition} \label{Cart}
    Let $x=(x_n) \in \R ^{\N}$, $y=(y_n)\in  \R^{\N}$ be sequences such that the series $\sum_n x_n$, $\sum_n y_n$ are absolutely convergent. Then 
    \begin{itemize}
        \item[(i)] $E(x,y)\subset E(x) \times E(y)$;

\item[(ii)] $pr_xE(x,y) = E(x)$, $pr_yE(x,y) = E(y)$;

\item[(iii)] $E(x,y)$ is compact;

\item[(iv)] $E(x,y)$ is symmetric with respect to $(\frac{1}{2}\sum_{n\in \N} x_n, \frac{1}{2}\sum_{n\in \N} y_n)$.   
    \end{itemize}
\end{proposition}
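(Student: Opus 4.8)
The plan is to argue everything directly from the definition, first noting that the $\R^2$-series $\sum_n (x_n,y_n)$ is absolutely convergent, since $\sum_n \|(x_n,y_n)\| \le \sum_n (|x_n| + |y_n|) < \infty$; hence every subseries $\sum_n \ve_n (x_n,y_n)$ converges and $E(x,y)$ is well defined.

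For (i), take an arbitrary point of $E(x,y)$ and write it as $\sum_n \ve_n (x_n,y_n)$ with $(\ve_n) \in \{0,1\}^\N$; its two coordinates are then $\sum_n \ve_n x_n$ and $\sum_n \ve_n y_n$, which belong to $E(x)$ and $E(y)$ respectively, so the point lies in $E(x) \times E(y)$. Part (ii) is then immediate: one inclusion is just the projection of (i), and for the reverse, given $a = \sum_n \ve_n x_n \in E(x)$, the same coefficient sequence yields the point $(a, \sum_n \ve_n y_n) \in E(x,y)$, whence $a \in pr_x E(x,y)$; the argument for $pr_y$ is symmetric.

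For (iii), I would realize $E(x,y)$ as the image of the compact space $\{0,1\}^\N$ (with the product topology) under the map $\Phi((\ve_n)) := \sum_n \ve_n (x_n,y_n)$, and check that $\Phi$ is continuous: given $\delta > 0$, pick $N$ with $\sum_{n>N}(|x_n|+|y_n|) < \delta$, so that any two sequences agreeing on their first $N$ terms have images within distance $\delta$; then $E(x,y) = \Phi(\{0,1\}^\N)$ is compact as a continuous image of a compact set. For (iv), I would use the complementation trick: if $p = \sum_n \ve_n (x_n,y_n) \in E(x,y)$, then replacing each $\ve_n$ by $1 - \ve_n$ gives $\big(\sum_n x_n, \sum_n y_n\big) - p \in E(x,y)$, which is exactly the reflection of $p$ through the point $\big(\tfrac12 \sum_n x_n, \tfrac12 \sum_n y_n\big)$; hence $E(x,y)$ is invariant under that central symmetry.

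None of the four parts presents a genuine difficulty — they are all short computations with subseries. The only place warranting a little care is the continuity estimate in (iii), i.e. making the tail bound explicit so that the product topology on $\{0,1\}^\N$ is used correctly, together with keeping track of the fact that one fixed coefficient sequence $(\ve_n)$ is being reused for both coordinates throughout (i), (ii) and (iv).
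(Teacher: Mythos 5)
Your proposal is correct: all four parts are verified by exactly the standard arguments (coordinatewise identification for (i)--(ii), continuity of the subsum map on the compact cube $\{0,1\}^\N$ for (iii), and the complementation trick $\ve_n \mapsto 1-\ve_n$ for (iv)). The paper states this proposition without proof as a simple observation (citing earlier work), and your write-up is precisely the intended justification, so nothing further is needed.
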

Reapeating the reasoning from \cite{MM}, we also get the following property.
\begin{proposition}
    Let $x=(x_n) \in \R ^{\N}$, $y=(y_n)\in  \R^{\N}$ be sequences such that the series $\sum_n x_n$, $\sum_n y_n$ are absolutely convergent. If every point of $E(x,y)$ has a unique representation, then $E(x,y)$ is a Cantor set.  
\end{proposition}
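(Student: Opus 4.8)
The plan is to exhibit $E(x,y)$ as the image of the Cantor cube $\{0,1\}^\N$ under a continuous bijection, and then use a compactness argument to promote that bijection to a homeomorphism.

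First I would introduce the coding map $\varphi\colon\{0,1\}^\N\to\R^2$, $\varphi\big((\ve_n)_n\big):=\big(\sum_{n=1}^\infty\ve_nx_n,\ \sum_{n=1}^\infty\ve_ny_n\big)$. By the very definition of the achievement set, $\varphi$ maps $\{0,1\}^\N$ onto $E(x,y)$. Identifying a sequence $(\ve_n)\in\{0,1\}^\N$ with the set $A=\{n\colon\ve_n=1\}$, a preimage of a point $(a,b)\in E(x,y)$ under $\varphi$ is exactly a representation of $(a,b)$; hence the hypothesis that every point of $E(x,y)$ has a unique representation says precisely that $\varphi$ is injective. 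Thus $\varphi$ is a bijection from $\{0,1\}^\N$ onto $E(x,y)$. (Note in passing that this already forces $E(x,y)$ to be infinite, so no degenerate finite case can occur.)

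Next I would verify that $\varphi$ is continuous. Give $\{0,1\}^\N$ the product topology, in which two sequences lie close together exactly when they agree on a long initial segment. Since $\sum_nx_n$ and $\sum_ny_n$ converge absolutely, the tails $\sum_{n>N}|x_n|$ and $\sum_{n>N}|y_n|$ tend to $0$; consequently, if $(\ve_n)$ and $(\ve_n')$ agree on the first $N$ coordinates then the distance between $\varphi((\ve_n))$ and $\varphi((\ve_n'))$ is at most $\sum_{n>N}\big(|x_n|+|y_n|\big)$. This proves (uniform) continuity of $\varphi$; equivalently, $\varphi$ is the uniform limit of the continuous finite partial-sum maps.

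Finally, $\{0,1\}^\N$ is compact by Tychonoff's theorem and $\R^2$ is Hausdorff, so the continuous bijection $\varphi$ is automatically a homeomorphism onto $E(x,y)$. Since $\{0,1\}^\N$ is nonempty, perfect, totally disconnected and compact, so is its homeomorphic copy $E(x,y)$; as $E(x,y)\subset\R^2$, the characterization of Cantor sets recalled above (\cite[Corollary 30.4]{Wil}) shows that $E(x,y)$ is a Cantor set. The whole argument is routine; the only steps that deserve care are the continuity estimate for $\varphi$, where absolute convergence enters, and the correct reading of ``unique representation'' as injectivity of $\varphi$.
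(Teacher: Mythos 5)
Your proof is correct: reading ``unique representation'' as injectivity of the coding map $\varphi\colon\{0,1\}^\N\to E(x,y)$, establishing continuity via the absolute-convergence tail estimate, and invoking compactness of the Cantor cube to upgrade the continuous bijection to a homeomorphism is exactly the standard argument. The paper itself gives no written proof here --- it only remarks that the reasoning of \cite{MM} can be repeated --- and that reasoning is essentially the same coding-map argument you present, so your proposal matches the intended approach.
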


\begin{corollary}
    Let $x=(x_n) \in \R ^{\N}$, $y=(y_n)\in  \R^{\N}$ be sequences such that the series $\sum_n x_n$, $\sum_n y_n$ are absolutely convergent. If every point in $E(x)$ or every point in $E(y)$ has a unique representation, then $E(x,y)$ is a Cantor set.
\end{corollary}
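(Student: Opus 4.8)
The plan is to deduce this immediately from the preceding Proposition, which guarantees that $E(x,y)$ is a Cantor set as soon as every point of $E(x,y)$ has a unique representation. So the whole task reduces to showing: if every point of $E(x)$ has a unique representation (the case of $E(y)$ being symmetric), then every point of $E(x,y)$ has a unique representation.

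To see this, I would argue by contraposition. Suppose some $(a,b)\in E(x,y)$ has two representations, i.e. there are sets $A,B\subset\N$ with $A\neq B$ such that $\sum_{i\in A}x_i=\sum_{i\in B}x_i=a$ and $\sum_{i\in A}y_i=\sum_{i\in B}y_i=b$ (using that a representation of $(a,b)$ is exactly a set $S\subset\N$ with $\sum_{i\in S}(x_i,y_i)=(a,b)$). Reading off the first coordinate, $A$ and $B$ are then two \emph{distinct} sets with $\sum_{i\in A}x_i=\sum_{i\in B}x_i=a$, so the point $a\in E(x)$ has at least two representations, contradicting the hypothesis. The same argument with the second coordinate handles the case where every point of $E(y)$ has a unique representation. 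Hence in either case every point of $E(x,y)$ has a unique representation, and the previous Proposition finishes the proof.

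I do not expect any genuine obstacle here; the only thing to be careful about is purely bookkeeping, namely that distinct subsets of $\N$ stay distinct when we only look at their $x$-subsums versus their full $(x,y)$-subsums, which is immediate since the index sets themselves are unchanged. It may also be worth remarking explicitly that the $E(y)$ case needs no separate work: it follows from the $E(x)$ case by exchanging the roles of the two coordinate sequences.
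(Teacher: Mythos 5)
Your argument is correct and is exactly the intended deduction: the paper states this corollary without proof as an immediate consequence of the preceding proposition, and your observation that two distinct index sets representing the same point of $E(x,y)$ would already give two distinct representations of its first (or second) coordinate is precisely the missing step. Nothing further is needed.
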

    \begin{example} \label{12Cantor}
Consider the sequence $(x_n,y_n)$, where $x_n = \frac{1}{2^n}$ for all $n \in \N$ and $y_n$ is such that for all $n \in \N$, $y_n \neq \sum_{i>n} y_i $. We will show that $E(x_n, y_n)$ is a Cantor set. We will do it, by showing that each point of $E(x_n,y_n)$ has a unique representation.
Consider the set $E(x_n)=[0,1]$. It is known that each point of $[0,1]$ has a unique infinite representation and only points that can be written as $x = \sum_{n \in A} \frac{1}{2^n}$ for some finite set $A \subset \N$, have exactly two representations. Specifically, if $x=\sum_{i=1}^k \frac{1}{2^{n_i}}$ for some finite increasing sequence of natural numbers $(n_i)$, then also $x=\sum_{i=1}^{k-1}\frac{1}{2^{n_i}} + \sum_{i=n_{k}+1}^{\infty} \frac{1}{2^i}$. 

Fix $(x,y) \in E(x_n,y_n)$ and suppose that it has two representations $A,A'\subset \N$. If the point $x$ has a unique representation, then $A=A'$. Hence if $A\neq A'$, then $A=\{n_1,\ldots, n_k\}$ is a nonempty finite set (the sequence  $(n_i)$ is assumed to be increasing) and $A'=\{n_1,\ldots, n_{k-1}\}\cup\{n_k+1,n_k+2,\ldots\}$. Then we have $y = \sum_{i=1}^k y_{n_i}$ and $y=\sum_{i=1}^{k-1} y_{n_i}+ \sum_{i=n_k+1}^\infty  y_i$, but this implies $y_{n_k}=\sum_{i=n_k+1}^\infty  y_i$, which contradicts our assumption. Thus, $E(x_n,y_n)$ is a Cantor set.


\end{example}
We would like to formulate the following problem.
\begin{problem}
    Characterize $E(p^n,q^n)$ for all $p, q \in (0,1)$. 
\end{problem}
From Example \ref{12Cantor} we can deduce that if $q\in (\frac{1}{2},1)$, then the set $E(\frac{1}{2^n},q^n)$ is a Cantor set. Also, if $p,q \in (0,1)$ and $p$ or $q$ is less than $\frac{1}{2}$, then $E(p^n,q^n)$ is a Cantor set. If $p=q \geq \frac{1}{2}$, then $E(p^n,p^n)$ is a segment. The question is what type of set is $E(p^n,q^n)$, if $p>q>\frac12.$ We conjecture that it is a Cantor set for every such $p$ and $q$. 

In the end of the section we would like to discuss the topic of achievement sets which are graphs of some functions.
Observe that for any absolutely convergent series $\sum_n x_n$ the set $E(x_n,x_n)$ is a graph of some function.
We also have the following property.
\begin{proposition}
  Let $x=(x_n) \in \R ^{\N}$, $y=(y_n)\in  \R^{\N}$ be sequences such that the series $\sum_n x_n$, $\sum_n y_n$ are absolutely convergent. If each point in $E(x)$ has a unique representation, then $E(x,y)$ is a graph of some function. Moreover, if also every point of $E(y)$ has a unique representation, then this function is a bijection from $E(x)$ onto $E(y)$.
\end{proposition}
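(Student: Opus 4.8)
The plan is to exhibit $E(x,y)$ directly as the graph of an explicitly defined function whose domain is $E(x)$. First I would invoke Proposition~\ref{Cart}(ii), which gives $pr_x E(x,y) = E(x)$, so $E(x)$ is the only candidate for the domain. Given $a \in E(x)$, the hypothesis supplies a \emph{unique} set $A = A(a) \subseteq \N$ with $a = \sum_{n \in A} x_n$; since $\sum_n y_n$ converges absolutely, I may set $f(a) := \sum_{n \in A(a)} y_n$, and this is well defined.

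Next I would verify that the graph of $f$ is exactly $E(x,y)$. For the inclusion $E(x,y) \subseteq \mathrm{graph}(f)$, take $(a,b) \in E(x,y)$ and choose $B \subseteq \N$ with $a = \sum_{n \in B} x_n$ and $b = \sum_{n \in B} y_n$; uniqueness of the representation of $a$ forces $B = A(a)$, hence $b = f(a)$. For the reverse inclusion, for any $B \subseteq \N$ the point $\bigl(\sum_{n \in B} x_n, \sum_{n \in B} y_n\bigr)$ lies in $E(x,y)$ by definition, and writing $a := \sum_{n\in B} x_n$ we again get $B = A(a)$, so this point equals $(a, f(a))$. Thus $E(x,y) = \{(a, f(a)) : a \in E(x)\}$, which proves the first assertion.

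For the ``moreover'' part I would first note that $f$ is automatically onto $E(y)$. Indeed, the uniqueness hypothesis says precisely that the map $B \mapsto \sum_{n \in B} x_n$ is a bijection from $\mathcal{P}(\N)$ onto $E(x)$, and $f$ sends $\sum_{n \in B} x_n$ to $\sum_{n \in B} y_n$; hence the range of $f$ is $\{\sum_{n \in B} y_n : B \subseteq \N\} = E(y)$. It then remains to establish injectivity using the extra assumption. If $f(a) = f(a')$, with $A, A'$ the unique representations of $a, a'$ respectively, then $\sum_{n \in A} y_n = \sum_{n \in A'} y_n$, so this common point of $E(y)$ admits the representations $A$ and $A'$; by the uniqueness assumption on $E(y)$ we get $A = A'$, and therefore $a = a'$.

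I do not anticipate a serious obstacle. The only step that deserves care is the surjectivity claim in the second part, namely that every subset of $\N$ occurs as the (unique) representation of some point of $E(x)$ --- but this is exactly the content of the uniqueness hypothesis. Note that continuity of $f$ is neither claimed nor needed here, although it would follow from compactness of $E(x,y)$ (Proposition~\ref{Cart}(iii)) if one wanted it.
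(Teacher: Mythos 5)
Your proof is correct, and it is essentially the natural argument the paper leaves implicit (the proposition is stated there without proof): define $f\bigl(\sum_{n\in A}x_n\bigr)=\sum_{n\in A}y_n$, use uniqueness of representations in $E(x)$ to see $f$ is well defined with graph equal to $E(x,y)$, and use uniqueness in $E(y)$ for injectivity, surjectivity being automatic. No gaps.
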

Let $(x_n)$ be a nonincreasing sequence of positive numbers such that the series $\sum_n x_n$ is absolutely convergent series in $\R$. We say that a series $\sum_n x_n$ is \textit{fast convergent}, if $a_n>r_n:= \sum_{i>n} x_i$ for all $n\in\N$.  We say
that a series $\sum x_n$ is \textit{slowly convergent}, if for any $n \in \N$ we have
$x_n \leq r_n$. It is known that if $\sum_n x_n$ is fast convergent, then $E(x_n)$ is a Cantor set and $E(x_n)$ is an interval if and only if $\sum_n x_n$ is slowly convergent. 
We were interested in the question whether there is a function $f\colon [0,1] \to [0,\infty)$ such that its graph is an achievement set of some series in $\R^2$ and $f$ is not linear. We found the answer in \cite{N}. 
\begin{theorem} \cite{N}
 Let $(x_n)$, $(y_n)$ be such that the series $\sum_n x_n$, $\sum_n y_n$ are absolutely convergent and slowly convergent. If $E(x_n,y_n)$ is a graph of some function $f$, then $f(t) = a t$ for some $a>0.$ In particular, $y_n = ax_n$ for all $n\in \N$.
\end{theorem}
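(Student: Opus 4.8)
The plan is to show that the continuous function $f$ whose graph is $E(x_n,y_n)$ is linear. Write $S_x=\sum_n x_n$, $S_y=\sum_n y_n$, and recall $r_n=\sum_{i>n}x_i$. Since $\sum_n x_n$ is slowly convergent, $E(x_n)=[0,S_x]$, so by Proposition~\ref{Cart}(ii) the domain of $f$ is all of $[0,S_x]$; by Proposition~\ref{Cart}(iii) the graph $E(x_n,y_n)$ is compact, and a compact subset of $\R^2$ that is a graph over a compact interval is the graph of a continuous function, so $f$ is continuous with $f(0)=0$. The first input is the self-similar structure: splitting subsets of $\N$ according to membership of $m$ gives, for every $m$,
\[
E((x_n,y_n)_{n\ge m})=E((x_n,y_n)_{n\ge m+1})\cup\big((x_m,y_m)+E((x_n,y_n)_{n\ge m+1})\big),
\]
and since every tail $(x_n)_{n\ge m}$ is again slowly convergent, $E((x_n,y_n)_{n\ge m})$ is a subset of the graph of $f$ whose projection onto the $x$-axis is the interval $[0,r_{m-1}]$ (with $r_0:=S_x$), hence it equals the graph of $f\res[0,r_{m-1}]$. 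Comparing the two descriptions of the translated copy gives, for each $m$, the identity $f(t+x_m)=f(t)+y_m$ valid for all $t\in[0,r_m]$, which I denote $(\star_m)$; in particular $f(x_m)=y_m$ (take $t=0$).

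Next I would record that $f\big(\sum_{n\in F}x_n\big)=\sum_{n\in F}y_n$ for every finite $F\subseteq\N$, by induction on $|F|$: apply $(\star_m)$ with $m=\min F$ to the point $t=\sum_{n\in F\setminus\{m\}}x_n$, which lies in $[0,r_m]$ since $F\setminus\{m\}\subseteq\{m+1,m+2,\dots\}$. Passing to the limit along truncations and using continuity, $f(S_x)=S_y$; thus the only candidate for the slope is $a:=S_y/S_x$.

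The technical heart — and the step I expect to be the main obstacle — is to extend each $(\star_m)$ from $[0,r_m]$ to the whole admissible interval $[0,S_x-x_m]$. For this I would exploit the central symmetry of the achievement sets (Proposition~\ref{Cart}(iv)): the symmetry of $E(x_n,y_n)$ itself converts validity of $(\star_m)$ on $[0,r_m]$ into validity on $[\sum_{i<m}x_i,\,S_x-x_m]$, and running the same manoeuvre inside the symmetric sets $E((x_n,y_n)_{n\ge K})$ for the tails $K=1,\dots,m$ — each of which, by slow convergence, is a graph over a genuine interval — enlarges the set of admissible $t$ by a further block of length $r_m$ each time, so that after finitely many steps the two end blocks overlap and $(\star_m)$ holds on all of $[0,S_x-x_m]$. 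This is exactly where slow convergence is indispensable: for a fast-convergent $(x_n)$ the sets $E((x_n)_{n\ge K})$ are Cantor sets rather than intervals, $f$ need only be a homeomorphism between Cantor sets, and the statement is false.

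Granting the extended equations $f(t+x_m)=f(t)+f(x_m)$ on $[0,S_x-x_m]$, the conclusion is routine. Adding indices one at a time gives $f(t+s)=f(t)+f(s)$ whenever $s$ is a finite subsum and $t+s\le S_x$; since finite subsums are dense in $[0,S_x]=E(x_n)$ and $f$ is continuous, this upgrades to $f(t+s)=f(t)+f(s)$ for all $t,s\ge0$ with $t+s\le S_x$. Cauchy's functional equation together with continuity forces $f(t)=at$ on $[0,S_x]$, whence $y_m=f(x_m)=ax_m$ for all $m$, with $a=S_y/S_x>0$.
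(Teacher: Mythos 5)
Your reductions are fine up to the crux: compactness of the graph gives continuity of $f$, $f(0)=0$; the tail decomposition correctly yields $f(t+x_m)=f(t)+y_m$ for $t\in[0,r_m]$ (so $f(x_m)=y_m$), and the concluding Cauchy-equation step would indeed be routine. You also correctly identify the extension of this identity to all $t\in[0,S_x-x_m]$ as the technical heart. But the mechanism you propose for that extension provably cannot deliver it. The reflection in the centre of $E((x_n,y_n)_{n\ge K})$, $K\le m$, transports the identity from $[0,r_m]$ exactly onto the block $[r_{K-1}-x_m-r_m,\,r_{K-1}-x_m]$ (your $K=1$ case is $[\sum_{i<m}x_i,\,S_x-x_m]$). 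These $m$ blocks each have length $r_m$, and consecutive ones are separated by gaps of length $x_K-r_m$, which is positive whenever $x_K>r_m$. Take the slowly convergent model case $x_n=2^{-n}$ (where $x_n=r_n$): for $m=3$ you obtain only $[0,\frac18]\cup[\frac14,\frac38]\cup[\frac34,\frac78]$ inside the target $[0,\frac78]$, and in general the blocks have total length $m\,2^{-m}$, far short of $1-2^{-m}$; so the claim that ``after finitely many steps the two end blocks overlap'' is false. Iteration does not rescue the argument: composites of these tail reflections (and of translations by $x_K$, $K<m$, even if you assume the identities for smaller indices already extended, as in an induction on $m$) are maps $t\mapsto \pm t+c$, where in the dyadic example every translation is by an even multiple of $2^{-m}$ and every reflection is about half an odd multiple of $2^{-m}$; hence every block you can ever reach has the form $[2a\cdot 2^{-m},(2a+1)\cdot 2^{-m}]$, and half of $[0,S_x-x_m]$ — e.g.\ all $t\in(2^{-m},2^{-m+1})$ — is permanently inaccessible to this bootstrapping.

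The missing ingredient is that slow convergence must enter through the equalities between distinct subsums that it forces; your argument uses only the translational self-similarity and the central symmetries, which the dyadic example shows are strictly weaker than the graph hypothesis. For $x_n=2^{-n}$ the relevant coincidence is $x_n=\sum_{i>n}x_i$: since $E(x_n,y_n)$ is a graph, it forces $y_n=\sum_{i>n}y_i$, hence $y_n=2y_{n+1}$ and proportionality is immediate — no covering of $[0,S_x-x_m]$ is ever performed. A complete proof has to produce and exploit enough such subsum coincidences for an arbitrary slowly convergent $(x_n)$, which is a genuinely different task from the covering scheme you sketch. Note also that the paper states this theorem with a citation to \cite{N} and contains no proof of it, so there is no in-paper argument to compare with; judged on its own terms, your proposal is incomplete at its central step.
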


\section{Examples}

We are going to discuss examples of sets that are achievement sets on the plane. Our main interest is in different topological types.

Clearly, given a one-dimensional achievement set $E(x_n)$, it is possible to get its isometric copy on the $x$-axis on the plane by taking the sequence $((x_n,0))_n$.
Also, given a linear function $F\colon \R^2\rightarrow\R^2$ and an achievement set $E=E(x_n,y_n) \subset \R^2$, it is possible to obtain the image $F(E)$. Indeed, it suffices to take the sequence $(F(x_n,y_n))_n$.

Like in any Banach space, given a sequence of achievement sets $(E_1,E_2,\ldots)$ such that  $\sum_{n=1}^\infty \sum_{k=1}^{\infty}\|x^n_k\|<\infty$, where $E_n=E((x^n_k)_{k})$, one can obtain a set $\bigcup_{n\in\N}(E_1+\dots+E_n)$ as an achievement set. Indeed, it is enough to take the diagonal enumeration
$(x^1_1, x^2_1, x^1_2, x^3_1, x^2_2, x^1_3,\ldots)$. In particular, we can obtain finite algebraic sums of any achievement sets.

From Guthrie-Nymann classification theorem (\cite[Theorem 1]{GN}, \cite{NS0}) we know that the achievement set of an absolutely convergent series on $\R$ is always of one of the following four topological types: a finite set, a finite union of closed intervals, a Cantor set or an M-Cantorval.
On the plane, it is possible to get the Cartesian product of any two one-dimensional achievement sets (see also \cite{BG}). If $\sum_n x_n$, $\sum_n y_n$ are absolutely convergent series on $\R$, then we have
$$E((x_1,0),(0,y_1),(x_2,0),(0,y_2),\ldots) = E(x_n) \times E(y_n).$$ As the consequence, we get the following proposition.
\begin{proposition} \label{types}
Let $I :=[0,1]$, $C$ be a Cantor set, and $M$ be an M-Cantorval.
    For any of the following sets:
\begin{itemize}
    \item a one-point set;

    \item $I$;

    \item $C$;

    \item $M$;

    \item $I\times I$;

    \item $I \times C$;

    \item $I \times M$;

    \item $C \times M$;

    \item $M \times M,$
\end{itemize}
there exists an absolutely convergent series $\sum_n x_n$ is $\R^2$ such that $E(x_n)$ is homeomorphic to that set. 
\end{proposition}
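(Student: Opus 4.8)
The plan is to produce each of the nine sets as an achievement set (up to homeomorphism) by combining three ingredients already available in the excerpt: (1) the one-dimensional realizations of a finite set, an interval, a Cantor set, and an M-Cantorval, which come from the Guthrie--Nymann classification together with concrete classical examples; (2) the observation, recorded just before the proposition, that $E((x_1,0),(0,y_1),(x_2,0),(0,y_2),\ldots) = E(x_n)\times E(y_n)$, so that the Cartesian product of any two one-dimensional achievement sets is again an achievement set in $\R^2$; and (3) the fact from the start of Section 3 that an isometric (indeed affine) copy of any one-dimensional achievement set sits inside the plane via the sequence $((x_n,0))_n$. With these in hand the proof is essentially a checklist.

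Concretely, I would first fix standard one-dimensional series: $x_n = 0$ eventually (or a single nonzero term) gives a one-point set; a slowly convergent series such as $x_n = 2^{-n}$ gives $E(x_n) = I$; a fast convergent series such as $x_n = 3^{-n}$ gives the ternary Cantor set $C$; and the classical multigeometric example mentioned in the introduction (the ternary Cantor set with the odd-step gaps filled in, equivalently a suitable sequence of the form $(s_1,\dots,s_k;q)$) gives an M-Cantorval $M$. Then the first four items in the list are handled by embedding these in the $x$-axis. For the remaining five items I would apply the product construction: $I\times I = E(2^{-n})\times E(2^{-n})$, $I\times C = E(2^{-n})\times E(3^{-n})$, $I\times M = E(2^{-n})\times E(\text{multigeometric})$, and similarly $C\times M$ and $M\times M$. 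In each case the displayed identity shows the product is literally an achievement set of the interleaved series, hence (trivially) homeomorphic to itself.

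One small point worth addressing carefully is that the proposition claims $E(x_n)$ is \emph{homeomorphic} to the named set rather than equal to it; this is automatic here since in every case we realize an actual set of the listed form, but I would state it this way to absorb the harmless fact that ``a Cantor set'' and ``an M-Cantorval'' are only defined up to homeomorphism (all M-Cantorvals are homeomorphic, as noted in the introduction, and likewise for Cantor sets by Corollary 30.4 of \cite{Wil}). I would also note that the product constructions require absolute convergence of the interleaved series, which holds because the two constituent series are absolutely convergent and the interleaving only permutes and zero-pads terms, so $\sum_n \|(x_n,0)\| + \sum_n \|(0,y_n)\| = \sum_n |x_n| + \sum_n |y_n| < \infty$.

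I do not expect any genuine obstacle: every tool needed has already been established in the excerpt, and the content of the proposition is precisely the bookkeeping of which pairs of one-dimensional types are being multiplied. If anything, the only thing to be slightly careful about is making sure the chosen concrete series really do have the claimed achievement sets — e.g., invoking the stated criterion that a fast convergent series yields a Cantor set and a slowly convergent one yields an interval, and citing the classical construction for the M-Cantorval — but these are all quoted facts rather than things to be proved here. So the proof is short: fix the four one-dimensional witnesses, embed the first four, and take the five Cartesian products via the interleaving identity.
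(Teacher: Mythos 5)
Your proposal is correct and follows essentially the same route as the paper: the paper derives the proposition directly from the interleaving identity $E((x_1,0),(0,y_1),(x_2,0),(0,y_2),\ldots)=E(x_n)\times E(y_n)$ together with the known one-dimensional realizations of a point, an interval, a Cantor set, and an M-Cantorval. Your additional remarks on absolute convergence of the interleaved series and on homeomorphism-invariance of the named types are harmless elaborations of the same argument.
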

 Some natural examples of achievement sets turn out to be finite unions of sets of the same type. Whenever we write that some set is of type $W \times V$, where $W,V \in \{I,C,M\},$ we mean that it is a finite union of sets homeomorphic to the set $W \times V$.

 All of the mentioned types differ in terms of topological properties and a natural question arises.
 \begin{question}\label{quest}
 Is any achievement set on the plane of one of the types from Proposition \ref{types}? 
 \end{question}
 In \cite{BG} the authors conjectured that the answer is positive for some particular family of achievement sets on the plane. In the next section we will provide an answer to the Question \ref{quest}. But first, we would like to exhibit some examples of achievement sets which are of types from Proposition \ref{types}, but at the same time have connected components that are neither homeomorphic to one-point set, a closed interval nor a closed square. We call such connected sets \textit{unusual}. In this section, $M$ denotes the specific M-Cantorval, namely the achievement set of the sequence $(x_n)=(\frac34, \frac12,\ldots, \frac 3{4^n}, \frac 2{4^n},\ldots)$. It is sometimes called Guthrie--Nymann Cantorval, see \cite{GN} or \cite{BPW}.
\begin{enumerate}[label=\arabic*.]
	\item \textbf{Finite unions of unit squares.} The unit square $[0,1]^2$ is an achievement set of a sequence $s=\left((\frac12,0),(0,\frac12),(\frac14,0),(0,\frac14),\ldots\right)$. Consider the following finite sequences:
	\begin{enumerate}[label=(\alph*)]
		\item $(\underbrace{(1,1),\ldots,(1,1)}_{n\text{ times}})$ for some $n\in\N$.
  
		\item $(\underbrace{(1,1),\ldots,(1,1)}_{n\text{ times}},\underbrace{(-1,1),\ldots,(-1,1)}_{m\text{ times}})$ for some $n,m\in \N$.
  
		\item $(\underbrace{(\frac23,\frac23),\ldots,(\frac23,\frac23)}_{n\text{ times}},\underbrace{(-1,1),\ldots,(-1,1)}_{m\text{ times}})$ for some $n,m\in \N$.
  
		\item $((-\frac23,\frac23),\underbrace{(\frac23,\frac23),\ldots,(\frac23,\frac23)}_{n\text{ times}})$ for some $n\in\N$.
  
	\end{enumerate}
 \begin{figure}
\begin{minipage}{.5\textwidth}
  \centering
    \includegraphics[scale=0.35]{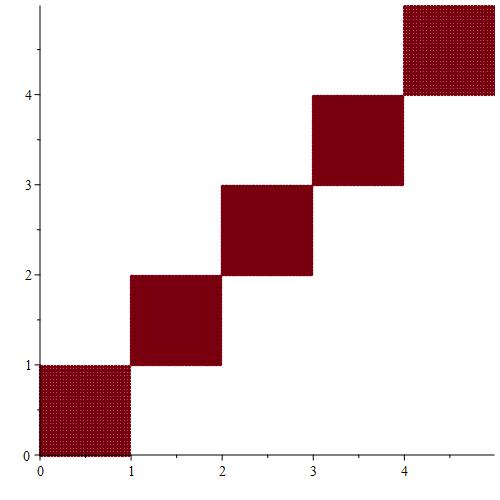}
    \caption{Achievement set from 1.(a) with $n=4$.}
    \label{fig:square-a}
\end{minipage}%
\begin{minipage}{.5\textwidth}
  \centering
  \includegraphics[scale=0.35]{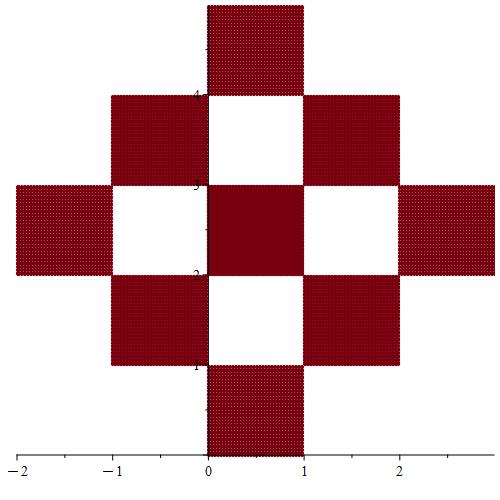}
    \caption{Achievement set from 1.(b) with $n=2$ and $m=2$.}
    \label{fig:square-b}
\end{minipage}
\begin{minipage}{.5\textwidth}
  \centering
   \includegraphics[scale=0.35]{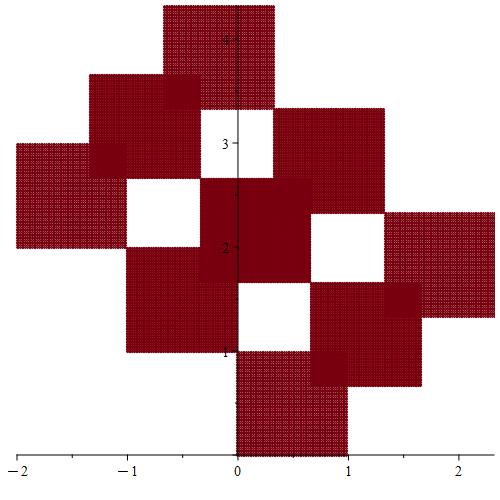}
    \caption{Achievement set from 1.(c) with $n=2$ and $m=2$.}
    \label{fig:square-c}
\end{minipage}%
\begin{minipage}{.5\textwidth}
  \centering
   \includegraphics[scale=0.35]{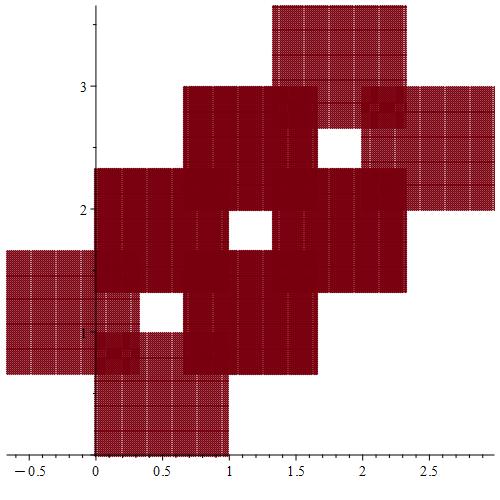}
    \caption{Achievement set from 1.(d) with $n=3$.}
    \label{fig:square-d}
\end{minipage}%
\end{figure}%
 
	The achievement set of a sequence obtained by prepending $s$ with any of the sequences as in (a)--(d), is an unusual connected set (see Figures \ref{fig:square-a}--\ref{fig:square-d}).
	Moreover, no two different sets obtained in this way are homeomorphic.
	\item \textbf{Infinite unions of unit squares.} 
	Since the algebraic sum of finitely many achievement sets is an achievement set, by taking the sequence
  $$(\underbrace{(-1,1),\ldots,(-1,1)}_{m\text{ times}},(\frac{1}{2},0),(0,\frac{1}{2}),(x_1,x_1),(\frac{1}{4},0),(0,\frac{1}{4}),(x_2,x_2),\ldots),$$ 
 we can obtain the algebraic sum $E$ of the unit square $[0,1]^2$, the finite set $\{(-k,k)\colon k\in\{0,\ldots, m\}\}$ and the set $\{(x,x)\colon x\in E(x_n)\}$,
where $(x_n)$ is a sequence of real numbers such that $\sum x_n$ is absolutely convergent. 
	\begin{enumerate}[label=(\alph*)]
		\item If $(x_n)=(\frac23,\ldots,\frac2{3^n},\ldots)$, then the set $E$ is an unusual connected set, which has infinitely many `holes', because $E(x_n)$ is a Cantor set (see Figure \ref{fig:inf-square-a}).

		\item If $(x_n)=(\frac34, \frac12,\ldots, \frac 3{4^n}, \frac 2{4^n},\ldots)$, then $E$ is another example of an unusual connected set, because $E(x_n)$ is an M-Cantorval (see Figure \ref{fig:inf-square-b}).
	\end{enumerate}
Both in (a) and (b) the achievement set E is of type $I\times I$, that is, it is a finite union of sets  homeomorphic to the closed square.
\begin{figure}[t]
 \centering
\begin{minipage}{.5\textwidth}
  \centering
    \includegraphics[scale=0.33]{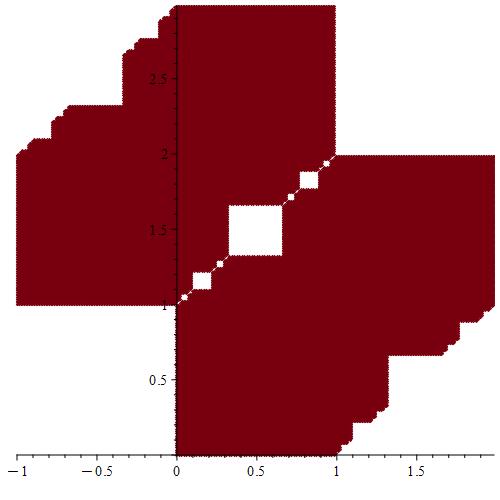}
    \caption{Achievement set from 2.(a) with $m=1$.}
    \label{fig:inf-square-a}
\end{minipage}%
\begin{minipage}{.5\textwidth}
  \centering
  \includegraphics[scale=0.33]{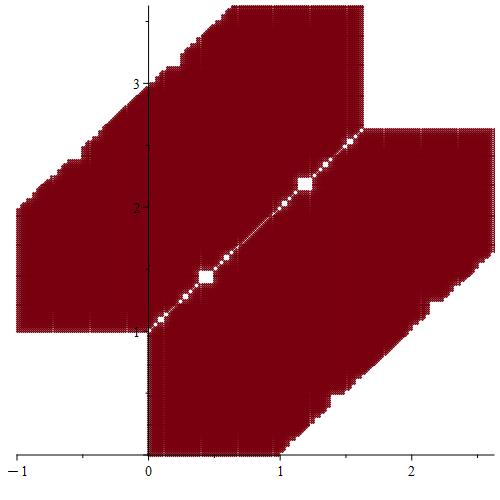}
    \caption{Achievement set from 2.(b) with $m=1$.}
    \label{fig:inf-square-b}
\end{minipage}
\end{figure}%
 \begin{figure}[b]
\begin{minipage}{.5\textwidth}
 \centering
      \includegraphics[scale=0.33]{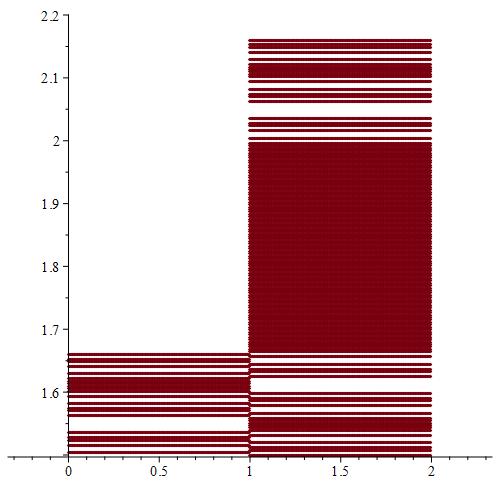}
      \caption{Achievement set from 3.(a).}
      \label{fig:ixm-a}
\end{minipage}%
\begin{minipage}{.5\textwidth}
  \centering
      \includegraphics[scale=0.33]{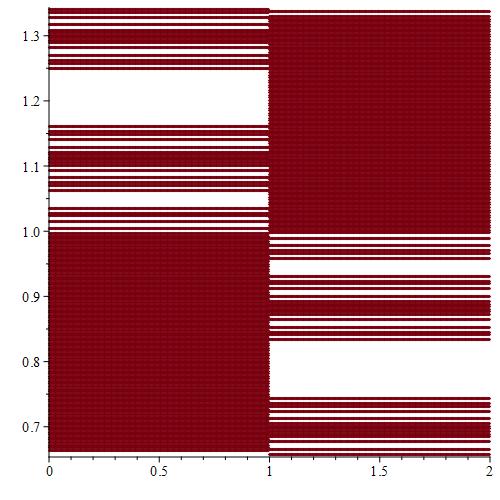}
      \caption{Achievement set from 3.(b).}
      \label{fig:ixm-b}
\end{minipage}
\begin{minipage}{.5\textwidth}
  \centering
      \includegraphics[scale=0.33]{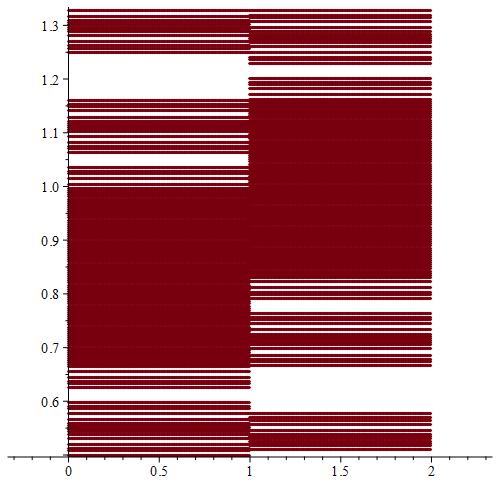}
      \caption{Achievement set from 3.(c).}
      \label{fig:ixm-c}
\end{minipage}%
\end{figure}%
	\item \textbf{Finite unions of $I\times M$.} The product $I\times M$ as well as the set $E=I\times M\cup((x,y)+I\times M)$ are achievement sets for any $(x,y)\in\R^2$. 
	\begin{enumerate}[label=(\alph*)]
		\item If $(x,y)=(1,1)$, then the connected component $C$ of a point $(1,\frac53)$ in $E$ is unusual, because it is the union of an arc and a closed rectangle with one point in common (see Figure \ref{fig:ixm-a}).
 
		\item If $(x,y)=(1,\frac 13)$, then the connected component $C$ of a point $(1,1)$ in $E$ is unusual (see Figure \ref{fig:ixm-b}). 

		\item If $(x,y)=(1,\frac 16)$, then the connected component $C$ of a point $(1,1)$ in $E$ is unusual (see Figure \ref{fig:ixm-c}). 

	\end{enumerate}

 Moreover, no two different sets  referred to in (a)--(c) are homeomorphic. To see this, note that if $h\colon E\rightarrow E'$ is a homeomorphism, where $E$ and $E'$ are subsets of $\R^n$, then $h[\inte E]=\inte E'$ (compare \cite[Corollary 5.18]{Ful}).
   In the set $C$ in (a) there is exactly one point $x$ such that $x\in\overline{\inte C}$ and $C\setminus\{x\}$ is disconnected. In the sets $C$ in (b) and (c) there are infinitely many such points.
  The interior of $C$ in (b) has two connected components, while the interior of $C$ in (c) has only one component.
\end{enumerate}

\section{Characterization attempts}
In this section we will answer the Question \ref{quest}. But first, let us recall the notion of the set of $P$-sums (\cite{NS}).

Let $P \subset \R$ be a finite set and let $a=(a_n)$ be a sequence of real numbers such that the series $\sum_n a_n$ is absolutely convergent, and $|a_n| \geq |a_{n+1}| > 0$ for all $n \in \N$. Let
$$S(P,a) := \left\{\sum_{n=1}^\infty \ve_n a_n\colon (\ve_n) \in P^\N \right\}.$$
The set $S(P;a)$ is called the \textit{set of} $P$-\textit{sums} for the sequence $a$.
In particular, if $P = \{0,1\},$ then $S(P,a) =E(a).$

To answer Question \ref{quest} we were looking for some topological characterizations of types of achievement sets from Proposition \ref{types}. One of the ideas was to use cuts of sets. For example, we conjectured that the achievement set is of type $M \times M$ if and only if every cut of the set is either finite or is an M-Cantorval. Although we didn't manage to prove it, the reasoning led us to the following theorem. 

\begin{theorem} \label{P-sums}
    Let $P$ be a finite set of real numbers such that $0 \in P$ and let $a=(a_n)$ be a sequence of real numbers such that the series $\sum_n a_n$ is absolutely convergent and $|a_n| \geq |a_{n+1}| > 0$ for all $n \in \N$. There exists a sequence $(x_n,y_n) \in (\R^{2})^\N$ such that $E(x_n,y_n)_0 = S(P,a).$
\end{theorem}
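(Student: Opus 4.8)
The plan is to realize $S(P,a)$ as the horizontal cut at height $0$ of a planar achievement set by building a sequence $(x_n,y_n)$ in two interleaved ``blocks'' of coordinates: one block of terms whose $y$-coordinates are $0$ and whose $x$-coordinates, when summed with coefficients from $\{0,1\}$, reproduce all the $P$-sums $\sum \ve_n a_n$; and a second block of terms that contribute only in the $y$-direction and which must be switched off (sum to $0$) exactly when we want to stay on the cut $\{y=0\}$. The first idea is this. Enumerate $P=\{p_0,p_1,\dots,p_r\}$ with $p_0=0$. For each index $n$ we want to encode a choice of an element $p_j\in P$ for the $n$-th term $a_n$ using finitely many $\{0,1\}$-coordinates. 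A convenient encoding is to use $r$ copies of the value $a_n$ paired with $r$ control values $c_{n,1},\dots,c_{n,r}$ in the $y$-coordinate, so that the only $\{0,1\}$-combinations with $y$-sum equal to $0$ are the ones that pick out a single admissible multiple of $a_n$; but since $P$ need not be $\{0,1,\dots,r\}$ this naive ``unary'' encoding does not directly give $p_j a_n$. Instead I would first reduce to the case $P\subset\Z_{\ge 0}$, or even $P=\{0,1,\dots,N\}$ for a suitable $N$, by a preliminary rescaling and padding argument, and handle general real $P$ at the end.

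The cleanest route is the following. Suppose first $P=\{0,1,2,\dots,N\}$. Replace each $a_n$ by $N$ consecutive copies $a_n,a_n,\dots,a_n$ in the $x$-coordinate; then the achievable $x$-subsums over this block are exactly $\{0,a_n,2a_n,\dots,Na_n\}$, and choosing $k$ of the $N$ copies gives $k a_n$, realizing the coefficient $k\in P$. So for $P=\{0,\dots,N\}$ the sequence $\big((a_1,0)^{\times N},(a_2,0)^{\times N},\dots\big)$ already has $E(x_n,y_n)_0=E(x_n,y_n)=S(P,a)$ with nothing to cancel. For a general finite $P=\{0,p_1,\dots,p_r\}\subset\R$, I would combine the ``$N$ copies'' trick with $y$-coordinate bookkeeping: introduce, alongside the $x$-block that lets us add any integer combination of small building blocks summing to one of the $p_j a_n$, a family of $y$-only terms whose subsums vanish precisely when the $x$-block's selection corresponds to a legitimate $p_j$. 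Concretely, pick building blocks $b_1,\dots,b_m$ (depending only on $P$, e.g.\ differences or a common ``basis'' of the $p_j$) with $x$-terms $b_1 a_n,\dots,b_m a_n$ and attach to them $y$-values $d_1,\dots,d_m$ chosen so large (relative to the tail $\sum_{k>n}\sum_i d_i$ — i.e.\ in a fast-convergent regime for the $y$-coordinates) that the total $y$-sum is $0$ if and only if in \emph{every} block one selects a subset of $\{b_1,\dots,b_m\}$ summing (with the $d_i$'s) to $0$, and those subsets are exactly the ones whose $b$-sum equals some $p_j$. The fast-convergence of the $y$-terms is what forces the cancellation to happen block-by-block rather than across blocks, and the ordering hypothesis $|a_n|\ge|a_{n+1}|$ on the $P$-sum side is preserved because all new $x$-terms are fixed multiples of $a_n$.

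The main obstacle is the third step: designing the $y$-weights $d_i$ and proving the ``if and only if'' characterization of the zero-$y$-sum selections. One must rule out cancellations that mix contributions from different blocks $n$ and $n'$ (handled by making the $y$-coordinate series fast-convergent, so the block-$n$ contribution dominates everything after it), and also rule out intra-block selections that sum to $0$ in $y$ but whose $x$-part is \emph{not} one of the $p_j a_n$ (handled by choosing $d_1,\dots,d_m$ to be, say, linearly independent over $\Q$ or, more simply, a super-increasing sequence, so that the only vanishing $\{0,1\}$-combinations are the ones we explicitly built in). A bookkeeping subtlety is that $0\in P$ must be allowed in every coordinate simultaneously — choosing the empty subset in a block contributes $0$ to both $x$ and $y$, which is consistent. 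I would also double-check absolute convergence of $\sum_n\|(x_n,y_n)\|$, which is immediate since each block has a fixed finite length and entries bounded by a constant times $|a_n|$ or by a fixed geometrically decaying amount. Finally, to go from $P\subset\R$ back to the integer case I would note that multiplying all $p_j$ by a suitable nonzero real does not change $S(P,a)$ up to the affine substitution $a_n\mapsto \lambda a_n$, and that the construction is insensitive to such scaling; the genuinely general case is then just the ``building block + $y$-control'' construction above with $b_1,\dots,b_m$ taken to be a $\Q$-linearly-independent generating set for $\{p_1,\dots,p_r\}$ together with the relations expressing each $p_j$ in that set.
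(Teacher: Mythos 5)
Your overall architecture (process the terms in blocks, attach $y$-coordinates as bookkeeping, and use fast decay of the $y$-scales to force cancellations to happen block-by-block) is exactly the skeleton of the paper's proof, and your warm-up for $P=\{0,1,\dots,N\}$ is fine. But the heart of the argument --- the explicit intra-block design of $y$-weights for an \emph{arbitrary} finite $P\subset\R$, together with the verification that the zero-$y$-sum selections are precisely the admissible ones --- is missing, and the specific mechanisms you propose for it do not work. You want, within each block, that the $\{0,1\}$-combinations of the weights $d_1,\dots,d_m$ summing to $0$ be exactly those whose $x$-part is some $p_j a_n$; this requires certain \emph{nonempty} subsets to have zero $y$-sum. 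Choosing the $d_i$ to be $\Q$-linearly independent, or positive and super-increasing, rules out \emph{all} nonempty zero-sum subsets, so with those choices the cut at height $0$ collapses to $\{0\}$ rather than $S(P,a)$: the two requirements you state are mutually contradictory as written. Likewise, the opening reduction ``pad $P$ to $\{0,1,\dots,N\}$'' cannot be a reduction, since enlarging $P$ strictly enlarges $S(P,a)$, and an arbitrary real $P$ (e.g.\ $\{0,1,\sqrt2\}$) is not a rescaling of an integer interval; the final ``$\Q$-linearly independent generating set plus relations'' suggestion inherits the same problem, because subsets of building blocks whose $b$-sum is \emph{not} in $P$ must be excluded precisely by the $y$-weights, and no mechanism achieving this is exhibited.

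For comparison, the paper resolves the intra-block design very simply, with no reduction on $P$ at all: writing $P\setminus\{0\}=\{p_1,\dots,p_{k-1}\}$ and $b_n=(k+2)^{-n}$, block $n$ consists of one control term $(0,\,+b_n)$ and the $k-1$ payload terms $(p_{i}a_{n+1},\,-b_n)$. With equal magnitudes $\pm b_n$ inside a block, a zero $y$-sum forces each block to be either empty or the control plus \emph{exactly one} payload (the ratio $1/(k+2)$ kills cross-block cancellation, which is the part of your plan that was sound), so the $x$-sum of each block is $q_{n+1}a_{n+1}$ with $q_{n+1}\in P$, and conversely every $P$-sum is realized by the obvious selection. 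To complete your proof you would need to replace your weight choices by something of this kind and then actually prove the induction establishing the block-by-block dichotomy; as it stands the proposal is a plan with its key lemma unproved and its stated implementation inconsistent.
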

\begin{proof}
If $P = \{0\}$, then it suffices to take $(x_n,y_n) = (0,0).$ Suppose that $|P| = k$ for some $k > 1$. Let $P_0 = P \setminus \{0\}.$
    Enumerate elements of the set  $P_0$ as follows $P_0 = \{p_1, p_2, \dots, p_{k-1}\}$, where $p_i < p_{i+1} $ for $i < k-1.$ Let $b_n = \frac{1}{(k+2)^n}$ for $n \in \N\cup\{0\}$.
    For $n \in \N \cup \{0\}$ put
    $x_{kn+1}=0$, $y_{kn+1}=b_n$,
    $x_{kn+i}=p_{i-1}a_{n+1},$
    $y_{kn+i}=-b_n$ for $i \in \{2,3,\dots, k\}.$
    We will show that 
    $E(x_n,y_n)_0 = S(P,a).$
    
    Let $w \in E(x_n,y_n)_0.$ This means that $(w,0) \in E(x_n,y_n).$ Hence there is $A \subset  \N$ such that $\sum_{i\in A}x_i=w$ and $\sum_{i \in A} y_i = 0.$ If $A = \emptyset$, then $w=0 \in S(P,a).$

    Assume that $A \neq \emptyset$. 
    We will show that for any $n \in \N \cup \{0\}$ 
    \begin{equation}\label{kn+1}
        kn+1 \in A \Leftrightarrow \exists!_{i\in \{2,3,\dots,k\}} kn+i \in A \Leftrightarrow \exists_{i\in \{2,3,\dots,k\}} kn+i \in A.
    \end{equation}

    We proceed by induction. Fix $m\in\N\cup\{0\}$ and assume that the above condition holds for all $n<m$. First observe that, since $y_{kn+1}=b_n$ and $y_{kn+i} = -b_n$ for $i\in\{2,3,\dots,k\}$, the inductive hypothesis implies $\sum_{i\in A,i<km+1} y_i = 0$.

    Assume that $km+1 \in A$ and suppose that $km+i \notin A$ for $i\in\{2,3,\dots,k\}.$  Then 
    $$0=\sum_{i \in A, i>km} y_i \geq y_{km+1}+ \sum_{n=m+1}^\infty -(k-1)b_n = b_m-(k-1)\sum_{n=m+1}^\infty \frac{1}{(k+2)^n}=$$$$=\frac1{(k+2)^m}-\frac{k-1}{(k+2)^{m+1}(1-\frac{1}{k+2})} =\frac2{(k+1)(k+2)^m}>0;$$
    a contradiction. So, there is $i\in\{2,3,\dots,k\}$ such that $km+i \in A$.
    Now, suppose that either $km+i, km+j \in A$ for $i,j \in \{2,3,\dots,k\},$ $i\neq j$ or $km+1 \notin A$ and $km+i \in A$ for some $i\in \{2,3,\dots,k\}.$ Then $$\sum_{i\in A, km<i\leq km+k}y_i\leq -b_m,$$ and consequently
    $$0=\sum_{i \in A, i>km} y_i \leq -b_m + \sum_{n>m} y_{kn+1} = -\frac{1}{(k+2)^m}+ \sum_{n=m+1}^\infty \frac{1}{(k+2)^n} = \frac{1}{(k+2)^m}\left(\frac1{k+1} -1\right) < 0;$$
    a contradiction, which finishes the proof of (\ref{kn+1}) for $m$. By the induction principle, we get that (\ref{kn+1}) is satisfied for all $n \in \N \cup \{0\}.$
    
We have $w=\sum_{n\in A} x_n$. By (\ref{kn+1}), for any $n \in \N\cup\{0\}$ there is at most one $i \in \{2,3, \dots, k\}$ such that $nk+i \in A$. We also have $x_{nk+i} = p_{i-1}a_{n+1}.$  Thus, since $0 \in P$ and $x_{nk+1} = 0$, we get that for any $n \in \N\cup\{0\}$ there is $q_{n+1} \in P$ such that 
$$\sum_{i\in A, kn < i \leq kn+k}x_i=q_{n+1}a_{n+1}.$$
Thus, $w=\sum_{n=1}^\infty q_na_n \in S(P,a).$
    
Now, let $w \in S(P,a)$. Then $w=\sum_{n=1}^\infty q_na_n$, for some sequence $(q_n) \in P^\N.$
    Let $$B := \{n \in \N \colon q_n \neq 0\},$$
    and for $n\in B$ let $i_n \in \{2,3, \dots,k\}$ be such that $q_n=p_{i_n-1}.$
    Let
    $$A_1:= \{k(n-1)+1\colon n \in B\}, $$
    $$A_2:=\{k(n-1)+i_n\colon n\in B\}, $$
    $$A =A_1 \cup A_2 .$$
    
    Then $$\sum_{n \in A_1}x_n = \sum_{n\in B} x_{(n-1)k+1}= 0,$$ and so
    $$\sum_{n\in A} x_n = \sum_{n \in A_2}x_n = \sum_{n\in B} x_{(n-1)k+i_n} = \sum_{n \in B} q_n a_n = \sum_{n\in \N} q_n a_n = w.$$
    Moreover,
    $$\sum_{n\in A} y_n = \sum_{n \in A_1}y_n + \sum_{n\in A_2} y_n= \sum_{n\in B} y_{(n-1)k+1}+\sum_{n\in B} y_{(n-1)k+i_n} = \sum_{n \in B} b_n - \sum_{n \in B} b_n = 0.$$
    Hence $(w,0) \in E(x_n,y_n)$, and so $w \in E(x_n,y_n)_0.$
\end{proof}
The following example shows that there are achievement sets of other types than in Proposition \ref{types}, and so it gives a negative answer to Question \ref{quest}.
\begin{example}
Let $x=(x_n)=(0,1,2,9;\frac{1}{3})$ and
$y=(1,-1,-1,-1;\frac{1}{6}).$ 
It is known that for $P=\{0,1,2,9\}$ and $a_n = \frac{1}{3^n}$ for $n \in \N$, the set $S(P,(a_n))$ is an L-Cantorval (see \cite{NS}, \cite{NS2} and \cite{W}). From Theorem \ref{P-sums} (and its proof), we know that $E(x,y)_0 = S(P,(a_n)).$ We will show that the set $E(x,y)$ is not homeomorphic to any known types of achievement sets on $\R^2.$ Obviously, $E(x,y)$ is not a Cantor set, a finite union of intervals or an M-Cantorval. By Proposition \ref{Cart} we have $E(x,y) \subset E(x) \times E(y)$. Observe that $E(y)$ is a Cantor set. Indeed, it is well known (see \cite{BFPW1}) that for any absolutely convergent series $\sum c_n$ the sets $E(c_n)$ and $E(|c_n|)$ are homeomorphic. Also, (see \cite[Theorem 3.5.]{BFS}) if $(s_1,\ldots, s_k;q)$ is a multigeometric sequence such that $q<\frac1{|\Sigma|}$, where $\Sigma=E(s_1,\ldots, s_k,0,0,\ldots)$, then $E(s_1,\ldots, s_k;q)$ is a Cantor set. But $(|y_n|)$ is a multigeometric sequence such that $\Sigma=\{0,1,2,3,4\}$ and $q=\frac16$.  
Therefore, $E(x,y)$ has empty interior. Thus, it cannot be a set of types $I \times I$, $I \times M$ or $M \times M$. 

For any set $S$ homeomorphic to $C \times M$ consider the closed subset $T(S)\subseteq S$ equal to the union of all trivial components of $S$ and endpoints of arcs being nontrivial components of $S$. Clearly, $T(S)$ is a Cantor set. Hence, given finitely many sets $S_1,\ldots, S_n$, each one homeomorphic to $C \times M$, the union $T:=T(S_1)\cup\ldots\cup T(S_n)$ is also a Cantor set. 
 So, $T$ is totally disconnected and closed, therefore for any non-trivial connected subset $A\subseteq S_1\cup\ldots\cup S_n$ there is a point $x\in A$ and $\varepsilon>0$ such that the open ball $B(x,\varepsilon)$ is disjoint from the union of all trivial components of $S_1\cup\ldots\cup S_n$  (because this union is contained in $T$).

From \cite{NS} we know that $[0, \frac{9}{8}]\times \{0\}$ is a component of $E(x,y)$. To prove that $E(x,y)$ is not of the type $C \times M$ (or $C \times I$, which does not contain trivial components) we will show that for any $t \in (0, \frac{9}{8})$ and any $\ve >0,$ there is a trivial component of $E(x,y)$ in the open ball $B:=B((t,0),\ve)$.
Let $t \in (0, \frac{9}{8})$ and $\ve >0$. We can find $m \in \N$ and a sequence $(c_n)\in\{0,1,2,9\}^m$ such that $u:=\sum_{i=1}^m c_ia_i \in (t-\frac{\ve}{2},t+\frac{\ve}{2}).$ Then $(u,0) = \sum_{i=1}^{4m} \ve_{i} (x_i,y_i)$, where $\ve_{i} \in \{0,1\}$ are such that $\ve_{4n+2} = 1 \Leftrightarrow c_n=1$,  $\ve_{4n+3} = 1 \Leftrightarrow c_n=2$,
$\ve_{4n+4} = 1 \Leftrightarrow c_n=9$ and $\ve_{4n+1} = 1 \Leftrightarrow c_n \neq 0$.
Let $k > m $ be such that $b:= \sum_{i=k+1}^{\infty} \frac{1}{6^i} < \frac{\ve}{2}$. 
Since the sequence $(\frac{1}{6^n})$ is fast convergent, we have that $b$ has a unique representation in $E(\frac{1}{6^n})$. 
Observe that if $\sum_{i\in I}\frac1{6^{i+1}}=\sum_{i\in A}y_i$ for some subsets $I\subset \N\cup\{0\}$ and $A\subset \N$, then for any
$i\in I$ we have $4i+1\in A$ and $ 4i+2, 4i+3, 4i+4 \notin A$ (the reasoning is similar to the proof of (\ref{kn+1})).
It follows that if 
$(c,b) = \sum_{n \in A} (x_n,y_n)$ for some $A\subset \N$, then $A$ must contain all indices $4j+1$ for $j \geq k$ and it cannot contain other indices greater than $4k+1$.
Therefore, there are only finitely many terms of the form $(c,b) \in E(x,y).$ In particular, $(u,b) = \sum_{i=1}^{4m} \ve_i (x_i,y_i)+\sum_{j\geq k} (x_{4j+1},y_{4j+1}) \in E(x,y).$  Since $E(y)$ is a Cantor set, we have that $\{(u,b)\}$ is a trivial component of $E(x,y)$. We also have $(u,b) \in B$. So, $E(x,y)$ cannot be of the types $C \times M$ or $C \times I.$  
\end{example}
\begin{figure}
\begin{minipage}{.5\textwidth}
 \includegraphics[scale=0.45]{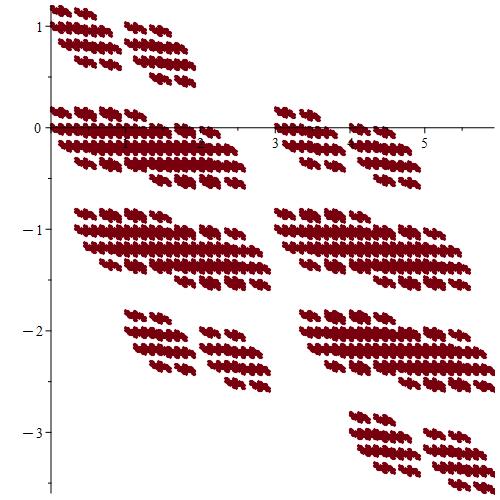}  
\end{minipage}%
\begin{minipage}{.5\textwidth}
      \includegraphics[scale=0.45]{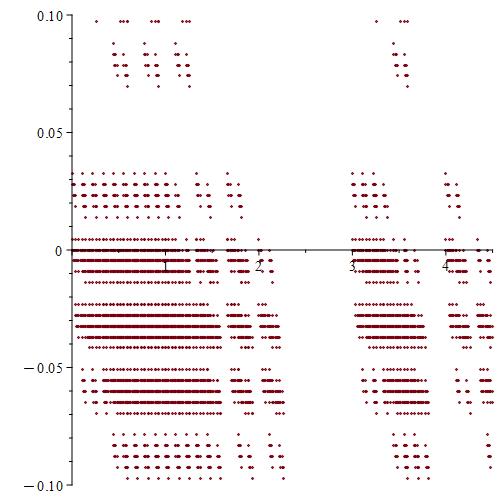}
\end{minipage}
    \caption{The pictures above show an approximation of the set $E(x,y)$ and its fragment.}
      \label{L-Cantorval-cut}
\end{figure}%

It is known that the sets of $P$-sums can have forms other than achievement set (for example, an L-Cantorval, an R-Cantorval, a infinite union of intervals with a singleton). However, the full classification of all possible sets of $P$-sums is still not known. From Theorem \ref{P-sums} we see that sets of $P$-sums play important role for achievement sets in $\R^2$. So, it is even more complicated to clasify all possible types of such sets.

\section{Spectre of a set}
In \cite{BPW} the authors introduced a notion of the center of distances. 
The center of distances of a subset $A$ of a metric space $(X,d)$ is defined as
$$C(A) = \{x\in [0,\infty)\colon \forall_{y \in A} \exists_{z\in A} d(y,z) = x\}.$$
This notion occurred to be useful for examining achievement sets in $\R$. Although the center of distances is well defined also for $\R^2$, it does not have properties that were useful in $\R$. This is why we were looking for some other similar notion which would help us in $\R^2$. Therefore, we define the spectre of a set.

Let $(X,+)$ be an Abelian group. Let $A \subset X$. We call the set
$$S(A) := \{x \in X\colon \forall_{y\in A}\, y+x \in A \mbox{ or } y-x \in A\}$$
the \textit{spectre} of a set $A$. 

Let us begin with  simple observations, which immediately follows from the definition of $S(A)$.
\begin{proposition}
Let $A \subset X$. If $x \in S(A)$, then $-x \in S(A)$. 
\end{proposition}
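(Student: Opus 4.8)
The plan is to unwind both sides directly from the definition of $S(A)$ and observe that the defining condition is visibly symmetric under $x\mapsto -x$. Concretely, I would fix an arbitrary $y\in A$ and use the hypothesis $x\in S(A)$ to obtain that $y+x\in A$ or $y-x\in A$. The only thing to notice is the pair of identities $y+x = y-(-x)$ and $y-x = y+(-x)$ in the Abelian group $X$, which rewrite this disjunction as ``$y-(-x)\in A$ or $y+(-x)\in A$'', i.e. exactly the clause required of $-x$ at the point $y$.

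Since $y\in A$ was arbitrary, this establishes $\forall_{y\in A}\,(y+(-x)\in A \text{ or } y-(-x)\in A)$, which is precisely the statement $-x\in S(A)$. No induction, no auxiliary constructions, and no use of any earlier result are needed; the argument is a one-line verification.

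I do not expect any genuine obstacle here: the assertion is a formal consequence of the symmetry of the defining predicate, and the ``hard part'' is merely to state it cleanly. (One could even remark that the same argument shows $0\in S(A)$ whenever $A\neq\emptyset$, since $y+0=y\in A$, but that is not needed for the proposition as stated.)

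\begin{proof}
Let $x\in S(A)$ and fix $y\in A$. By the definition of the spectre, $y+x\in A$ or $y-x\in A$. In the Abelian group $X$ we have $y+x=y-(-x)$ and $y-x=y+(-x)$, so the previous disjunction says exactly that $y+(-x)\in A$ or $y-(-x)\in A$. As $y\in A$ was arbitrary, $-x\in S(A)$.
\end{proof}
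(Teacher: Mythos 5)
Your proof is correct and matches the paper's intent exactly: the paper states this as a simple observation following immediately from the definition of $S(A)$, and your one-line verification via $y+x=y-(-x)$ and $y-x=y+(-x)$ is precisely that observation made explicit.
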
  
\begin{proposition} \label{zero}
For any $A \subset X$, $0 \in S(A)$.
\end{proposition}
\begin{proposition}\label{prop3}
If $0 \in A$, then $S(A) \subset A \cup (-A)$.
\end{proposition}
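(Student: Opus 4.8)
The statement to prove is: if $0 \in A$, then $S(A) \subset A \cup (-A)$. This is Proposition \ref{prop3}. Let me sketch the proof.

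The plan is to take an arbitrary $x \in S(A)$ and show $x \in A \cup (-A)$. By the definition of the spectre, $x \in S(A)$ means that for every $y \in A$, either $y + x \in A$ or $y - x \in A$. The natural move is to instantiate this universally quantified condition at the specific element $y = 0$, which is available precisely because of the hypothesis $0 \in A$.

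Applying the defining property of $S(A)$ with $y = 0$ gives: either $0 + x \in A$ or $0 - x \in A$, i.e., either $x \in A$ or $-x \in A$. In the first case $x \in A \subset A \cup (-A)$; in the second case $-x \in A$, so $x \in -A \subset A \cup (-A)$. Either way $x \in A \cup (-A)$, which is what we wanted.

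There is essentially no obstacle here — the whole content is the observation that $0 \in A$ lets us plug $y = 0$ into the quantifier in the definition of $S(A)$. The proof is a one-line instantiation followed by a trivial case split, so I would write it as a short paragraph rather than a structured argument.

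\begin{proof}
Let $x \in S(A)$. Since $0 \in A$, applying the defining condition of the spectre to $y = 0$, we get $0 + x \in A$ or $0 - x \in A$, that is, $x \in A$ or $-x \in A$. In the former case $x \in A$, and in the latter case $x \in -A$. Hence $x \in A \cup (-A)$.
\end{proof}
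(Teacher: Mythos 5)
Your proof is correct and is exactly the intended argument: the paper lists this as an immediate consequence of the definition, and instantiating the quantifier at $y=0$ (available since $0\in A$) is precisely that observation. Nothing to add.
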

\begin{proposition}
For any $A\subset X$ and $q \in X$, $S(q+A) = S(A).$
\end{proposition}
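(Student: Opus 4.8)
The plan is to prove the two inclusions $S(A) \subset S(q+A)$ and $S(q+A) \subset S(A)$ separately, obtaining the second from the first by a symmetry trick.

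First I would take an arbitrary $x \in S(A)$ and check directly from the definition that $x \in S(q+A)$. Every element of $q+A$ is of the form $q+y$ for some $y \in A$. Since $x \in S(A)$, we have $y+x \in A$ or $y-x \in A$. In the first case $(q+y)+x = q+(y+x) \in q+A$, and in the second case $(q+y)-x = q+(y-x) \in q+A$; so in either case $(q+y)+x \in q+A$ or $(q+y)-x \in q+A$. As $q+y$ was an arbitrary element of $q+A$, this shows $x \in S(q+A)$, hence $S(A) \subset S(q+A)$. Note that this step uses only the associativity and commutativity of the group operation.

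For the reverse inclusion I would apply the inclusion just established with $A$ replaced by $q+A$ and $q$ replaced by $-q$: this gives $S(q+A) \subset S\bigl(-q+(q+A)\bigr) = S(A)$, where we use the identity $-q+(q+A) = A$ valid in any group. Combining the two inclusions yields $S(q+A) = S(A)$. There is essentially no real obstacle in this argument; the only points requiring (routine) care are the order-of-operations manipulations in the Abelian group $X$ — namely that $q+(y\pm x) = (q+y)\pm x$ and that $-q+(q+A)=A$ — both of which are immediate.
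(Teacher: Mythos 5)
Your argument is correct and is exactly the direct verification the paper has in mind when it states this proposition as an immediate consequence of the definition (the paper gives no written proof). Both inclusions are handled soundly, and the symmetry trick with $-q$ for the reverse inclusion is a clean way to avoid repeating the computation.
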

From the previous result, we get that $S$ is translation invariant. We will now show that in $\R^2$ the spectre of a set is not rotation invariant, but rotates with the set. We have even a more general result.
\begin{proposition}
Let $F\colon \R^2 \to \R^2$ be a linear bijective function. Then for any set $A\subset \R^2$, $S(F(A)) = F(S(A)).$
\end{proposition}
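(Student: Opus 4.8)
The plan is to prove the set equality by showing both inclusions, and the key observation is that linearity of $F$ turns the defining condition for $S(A)$ — which is phrased purely in terms of the group operation $+$ and $-$ — into the corresponding condition for $S(F(A))$. Since $F$ is a linear bijection, it is in particular a group automorphism of $(\R^2,+)$: it satisfies $F(u+v) = F(u)+F(v)$, $F(-v) = -F(v)$, it is injective, and it is surjective. These are exactly the structural features we need.

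First I would prove $F(S(A)) \subseteq S(F(A))$. Take $x \in S(A)$ and set $x' := F(x)$; I want $x' \in S(F(A))$. Fix an arbitrary $y' \in F(A)$, say $y' = F(y)$ with $y \in A$ (using surjectivity of $F$ onto $F(A)$, which is automatic). By hypothesis, either $y+x \in A$ or $y-x \in A$. Applying $F$ and using additivity, in the first case $F(y)+F(x) = y' + x' \in F(A)$, and in the second case $F(y)-F(x) = y'-x' \in F(A)$. Since $y' \in F(A)$ was arbitrary, $x' = F(x) \in S(F(A))$, which gives the inclusion.

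For the reverse inclusion $S(F(A)) \subseteq F(S(A))$, I would run the same argument with $F^{-1}$ in place of $F$, noting that $F^{-1}$ is again a linear bijection (here bijectivity of $F$ is essential, and this is the one place the hypothesis genuinely bites). By the inclusion just proved, applied to $F^{-1}$ and the set $F(A)$, we get $F^{-1}(S(F(A))) \subseteq S(F^{-1}(F(A))) = S(A)$; applying $F$ to both sides and using $F \circ F^{-1} = \mathrm{id}$ yields $S(F(A)) \subseteq F(S(A))$. Combining the two inclusions gives $S(F(A)) = F(S(A))$.

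I do not expect any serious obstacle here: the only subtlety is remembering that a linear bijection of $\R^2$ automatically respects $+$, $-$, and $0$, so that the condition "$y+x \in A$ or $y-x \in A$" is preserved verbatim under $F$; once that is noted, the proof is a routine two-inclusion argument, and the trick of reusing the first inclusion for $F^{-1}$ avoids repeating essentially the same computation. One could remark afterwards that, specializing $F$ to a rotation, this shows the spectre rotates with the set, as promised in the text preceding the statement.
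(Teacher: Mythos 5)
Your proof is correct and follows essentially the same route as the paper: first establish $F(S(A))\subseteq S(F(A))$ for any linear map using additivity, then obtain the reverse inclusion by applying that same inclusion to $F^{-1}$ and the set $F(A)$, exactly as the authors do. No gaps.
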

\begin{proof}
Let $F\colon \R^2 \to \R^2$ be a linear function,  $A\subset \R^2$, $(x,y) \in S(A)$ and $(a,b) \in A$. Then $(a+x,b+y) \in A$ or $(a-x,b-y) \in A$. Since $F(a,b)\pm F(x,y)=F(a\pm x,b\pm y)$, it follows that $F(a,b)+F(x,y)\in F(A)$ or $F(a,b)-F(x,y)\in F(A)$, that is, $F(x,y)\in S(F(A))$.

Thus, we have proved that $F(S(A))\subset S(F(A))$ for any linear function $F$ and any set $A\subset \R^2$. Now assume that $F$ is bijective. We get
$$F(S(A))\subset S(F(A))=F(F^{-1}(S(F(A))))\subset F(S(F^{-1}(F(A))))=F(S(A)).$$
\end{proof}
In particular, since any rotation mapping is bijective and linear, we have the following corollary. 
\begin{corollary}
  Let $r\colon \R^2 \to \R^2$ be a rotation mapping. Then for any set $A\subset \R^2$, $S(r(A)) = r(S(A)).$  
\end{corollary}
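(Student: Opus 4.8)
The statement to prove is the corollary: for a rotation mapping $r\colon \R^2\to\R^2$ and any $A\subset\R^2$, we have $S(r(A))=r(S(A))$.

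The plan is to simply invoke the immediately preceding proposition, which establishes $S(F(A))=F(S(A))$ for every linear bijective $F\colon\R^2\to\R^2$ and every $A\subset\R^2$. So the only thing that needs to be said is that a rotation of the plane about any point is a linear bijective map, and then the corollary is an instance of the proposition with $F=r$.

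First I would address the word ``rotation''. If by a rotation we mean a rotation about the origin, it is literally a linear map given by an orthogonal matrix with determinant $1$, and it is bijective since the matrix is invertible, so the proposition applies verbatim. If one allows rotations about an arbitrary center $p$, such a map has the form $z\mapsto \rho(z-p)+p=\rho(z)+(p-\rho(p))$ where $\rho$ is rotation about the origin; this is an affine bijection rather than a linear one. In that case I would first use the translation-invariance of the spectre (the proposition stating $S(q+A)=S(A)$ for all $q$) together with the linear case: writing $r(A)=\rho(A)+c$ with $c=p-\rho(p)$, we get $S(r(A))=S(\rho(A)+c)=S(\rho(A))=\rho(S(A))$, and on the other hand $r(S(A))=\rho(S(A))+c$. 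These agree only up to the translation by $c$, so strictly speaking the statement as written should be read with rotations about the origin; I would simply note that $r$ being linear and bijective makes it a special case of the preceding proposition. The cleanest writeup is one line.

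There is essentially no obstacle here: the corollary is an immediate specialization of the previous proposition, and the only ``content'' is the elementary observation that a rotation of $\R^2$ (about the origin) is linear and invertible. I would therefore write the proof as: ``Any rotation mapping $r\colon\R^2\to\R^2$ is linear and bijective, so the claim follows directly from the previous proposition applied with $F=r$.'' No calculation is required.

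\begin{proof}
Any rotation mapping $r\colon \R^2 \to \R^2$ is linear and bijective, so the conclusion is an immediate consequence of the previous proposition applied with $F = r$.
\end{proof}
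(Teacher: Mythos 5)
Your proof is correct and matches the paper exactly: the authors also derive the corollary in one line by noting that a rotation mapping is linear and bijective and applying the preceding proposition with $F=r$. Your side remark about rotations with a nonzero center is a reasonable clarification but not needed, since the paper clearly intends rotations about the origin.
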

 
\begin{example}\label{e1}
Let $X= \R^2$ and $A = [0,1]^2$. Observe that $S(A) = \left(\{0\} \times [-\frac{1}{2},\frac{1}{2}]\right) \cup \left([-\frac{1}{2},\frac{1}{2}] \times \{0\}\right).$ Indeed, 
inclusion $\left(\{0\} \times [-\frac{1}{2},\frac{1}{2}]\right) \cup \left([-\frac{1}{2},\frac{1}{2}] \times \{0\}\right) \subset S(A)$ is obvious. Moreover, by Proposition \ref{prop3},
$S(A) \subset A \cup -A $.
We also have
$$ \{x \in \R^2\colon (0,1) + x \in A\vee (0,1) - x \in A\} = \left([0,1]\times [-1,0] \right)\cup \left([-1,0]\times [0,1]\right).$$
Since $(0,1) \in A$, we have $$ S(A) \subset (A \cup -A) \cap \{x \in \R^2\colon (0,1) + x \in A \vee (0,1)-x \in A\} $$$$=\left(\left([0,1]\times [0,1] \right)\cup \left([-1,0]\times [-1,0]\right)\right) \cap \left(\left([0,1]\times [-1,0] \right)\cup \left([-1,0]\times [0,1]\right)\right)= \left(\{0\} \times [-1,1]\right) \cup \left([-1,1] \times \{0\}\right).
$$ 
Also $(\frac{1}{2},\frac{1}{2}) \in A$, so $$\textstyle S(A) \subset \{x \in \R^2\colon \left(\frac{1}{2},\frac{1}{2}\right)  + x \in A\vee \left(\frac{1}{2},\frac{1}{2}\right)  - x \in A\} = \left[-\frac{1}{2},\frac{1}{2}\right] \times \left[-\frac{1}{2},\frac{1}{2}\right].$$
Therefore, 
$S(A) \subset \left(\{0\} \times [-\frac{1}{2},\frac{1}{2}]\right) \cup \left([-\frac{1}{2},\frac{1}{2}] \times \{0\}\right), $
and so
$S(A) = \left(\{0\} \times [-\frac{1}{2},\frac{1}{2}]\right) \cup \left([-\frac{1}{2},\frac{1}{2}] \times \{0\}\right). $

\end{example}
We would like to compare the notion of spectre of a set with the notion of center of distances introduced by Bielas, Plewik and Walczyńska in \cite{BPW}. 
From now on we will assume that $X$ is equipped with a translation invariant metric $d$ such that $(X,+)$ is a topological group with the topology induced by $d$. 

First, let us make two easy observations, which show the connection between a center of distances and a spectre of a set.
\begin{proposition}
Consider $\R$ with a natural metric. Then for any $A \subset \R$, we have $S(A) \cap [0,\infty) = C(A)$.
\end{proposition}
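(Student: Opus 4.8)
The plan is to prove the two inclusions $S(A)\cap[0,\infty)\subseteq C(A)$ and $C(A)\subseteq S(A)\cap[0,\infty)$ directly from the definitions, using only the specific structure of $\R$ with its usual metric, namely that $d(y,z)=|y-z|$ and that $y\pm x$ are the only two points at distance $x$ from $y$.

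For the inclusion $C(A)\subseteq S(A)\cap[0,\infty)$: first note that $C(A)\subseteq[0,\infty)$ by definition. Now take $x\in C(A)$ and an arbitrary $y\in A$. By definition of the center of distances there is $z\in A$ with $|y-z|=x$, which forces $z=y+x$ or $z=y-x$; in either case $y+x\in A$ or $y-x\in A$, so $x\in S(A)$. This direction is essentially immediate.

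For the inclusion $S(A)\cap[0,\infty)\subseteq C(A)$: take $x\in S(A)$ with $x\geq 0$, and fix $y\in A$. By definition of the spectre, $y+x\in A$ or $y-x\in A$; setting $z$ to be whichever of $y+x$, $y-x$ lies in $A$, we get $z\in A$ with $d(y,z)=|y-z|=x$. Since $y$ was arbitrary, $x\in C(A)$.

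I do not expect any genuine obstacle here; the statement is a routine unwinding of the two definitions, and the only point worth flagging is that the equivalence relies on the one-dimensionality of $\R$ — the fact that the sphere of radius $x$ around a point consists of exactly the two points $y\pm x$ — which is precisely why the proposition fails to have an analogue in $\R^2$ and motivates introducing the spectre in the first place. One could present the whole argument in a single short paragraph: for any $y\in A$, the conditions ``$\exists z\in A\ d(y,z)=x$'' and ``$y+x\in A\text{ or }y-x\in A$'' are literally the same statement when $x\geq0$, and quantifying over $y\in A$ gives $C(A)=S(A)\cap[0,\infty)$.
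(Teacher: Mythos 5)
Your proof is correct: for $x\ge 0$ the condition $\exists z\in A\ |y-z|=x$ is literally equivalent to $y+x\in A$ or $y-x\in A$, and quantifying over $y\in A$ gives the stated equality. The paper records this proposition as an easy observation without proof, and your definitional unwinding is exactly the intended argument, so there is nothing to add.
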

\begin{proposition}
Let $(X,d)$ be a metric space with a translation invariant metric and $A \subset X$. If $x\in S(A)$, then $d(x,0) \in C(A)$.
\end{proposition}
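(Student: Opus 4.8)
The plan is to unwind both definitions and read off the conclusion, the only nontrivial point being the symmetry $d(x,0)=d(-x,0)$ coming from translation invariance.

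First I would fix $x\in S(A)$ and set $r:=d(x,0)$. Then $r\in[0,\infty)$ automatically, so to show $r\in C(A)$ it suffices to verify the defining property: for every $y\in A$ there is $z\in A$ with $d(y,z)=r$. To that end, fix an arbitrary $y\in A$. By the definition of the spectre, either $y+x\in A$ or $y-x\in A$, and I would split into these two cases.

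In the first case take $z:=y+x\in A$; then $d(y,z)=d(y,y+x)=d(0,x)=r$ by translation invariance (shift both arguments by $-y$). In the second case take $z:=y-x\in A$; then $d(y,z)=d(y,y-x)=d(0,-x)$ after shifting by $-y$, and a further shift by $x$ gives $d(0,-x)=d(x,0)=r$. In either case there is $z\in A$ with $d(y,z)=r$, and since $y\in A$ was arbitrary, $r\in C(A)$.

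The main (and essentially only) obstacle is the observation $d(-x,0)=d(x,0)$, which is not one of the metric axioms but does follow from translation invariance applied twice; everything else is a direct substitution. Alternatively one could invoke the earlier proposition giving $-x\in S(A)$, but this is not needed.
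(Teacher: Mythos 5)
Your proof is correct and follows essentially the same route as the paper: fix $x\in S(A)$, take an arbitrary point of $A$, use the case distinction from the definition of the spectre, and apply translation invariance to see that $d(a\pm x,a)=d(x,0)$. Your explicit remark that $d(-x,0)=d(x,0)$ follows from shifting by $x$ is exactly the observation implicit in the paper's one-line computation $d(a+x,a)=d(a-x,a)=d(x,0)$.
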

\begin{proof}
    Let $x \in S(A).$ Take any $a \in A.$ Then $a+x \in A$ or $a-x \in A.$ Since $d$ is translation invariant, we have
    $d(a+x,a)=d(a-x,a)=d(x,0).$ By the arbitrariness of $a$, $d(x,0) \in C(A).$
\end{proof}
The following results are analogous to results from the paper \cite{BBP} concerning center of distances. 
\begin{theorem}
If $A$ is nonempty and compact, then $S(A)$ is also compact.
\end{theorem}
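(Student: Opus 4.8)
The statement to prove is: if $A$ is nonempty and compact, then $S(A)$ is compact. Since we are working in a topological group with a translation invariant metric $d$, compactness of the closed set $S(A)$ (once we know it is closed) will follow from boundedness, and in a metric group "bounded" should be understood via $d$. The plan is to show two things: (1) $S(A)$ is bounded, and (2) $S(A)$ is closed.

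For boundedness, I would fix $a_0\in A$ (using nonemptiness). For any $x\in S(A)$ we have $a_0+x\in A$ or $a_0-x\in A$, hence $x\in (A-a_0)\cup(a_0-A)$. Since $A$ is compact and translations and the inversion $t\mapsto -t$ are homeomorphisms, the set $(A-a_0)\cup(a_0-A)$ is compact, in particular bounded; thus $S(A)$ is contained in a bounded set. (This is essentially the content of Proposition~\ref{prop3} specialized to a translate of $A$ containing $0$.)

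For closedness, I would take a sequence $x_n\in S(A)$ with $x_n\to x$ in $X$ and show $x\in S(A)$. Fix an arbitrary $y\in A$. For each $n$, either $y+x_n\in A$ or $y-x_n\in A$; pass to a subsequence along which the same alternative holds, say $y+x_{n_k}\in A$ for all $k$. Since $(X,+)$ is a topological group, $y+x_{n_k}\to y+x$, and since $A$ is compact hence closed, $y+x\in A$. (If instead $y-x_{n_k}\in A$ along the subsequence, we get $y-x\in A$ similarly.) As $y\in A$ was arbitrary, $x\in S(A)$. Thus $S(A)$ is closed, and being a closed subset of a compact set (the closure of the bounded set found above, intersected appropriately — or more carefully, $S(A)$ closed and bounded in a setting where we can invoke compactness) it is compact. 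Actually the cleanest packaging: $S(A)\subseteq (A-a_0)\cup(a_0-A)=:K$, $K$ is compact, and $S(A)$ is closed, so $S(A)$ is a closed subset of the compact set $K$, hence compact.

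\textbf{Main obstacle.} The only delicate point is the closedness argument, specifically handling the disjunction "$y+x\in A$ or $y-x\in A$" in the limit: a naive limit does not respect an "or", so one must extract a subsequence (depending on $y$) on which one fixed alternative holds, and this is legitimate because along that subsequence we stay in the closed set $A$. One should also be slightly careful that in a general metric group sequential arguments suffice — they do, since the topology is induced by a metric, so $A$ compact means sequentially compact and closedness can be checked sequentially. No other step presents real difficulty; boundedness is immediate from Proposition~\ref{prop3} applied after translating so that $0\in A$.
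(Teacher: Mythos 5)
Your proof is correct, and it rests on the same basic observation as the paper's: for each $a\in A$ the defining condition at $a$ says precisely that $x\in (A-a)\cup(a-A)$, a compact set (continuous image of $A$ under $b\mapsto b-a$ and $b\mapsto a-b$). The paper turns this into the exact identity $S(A)=\bigcap_{a\in A}\left((A-a)\cup(a-A)\right)$ and is then done in one stroke: each term is compact, hence closed (the space is metric), so the intersection is a closed subset of any one term and therefore compact. You instead use only the single containment $S(A)\subseteq (A-a_0)\cup(a_0-A)$ and prove closedness separately by a sequential argument, extracting for each fixed $y\in A$ a subsequence along which one branch of the disjunction ``$y+x_n\in A$ or $y-x_n\in A$'' holds and passing to the limit inside the closed set $A$; that is exactly the delicate point your plan identifies, and you handle it correctly (in a metric space sequential closedness suffices, and continuity of the group operations gives the convergence you need). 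The paper's formulation buys brevity — the ``or'' is absorbed into the union before any limit is taken, so no subsequence extraction is needed — while yours is more hands-on but equally valid. One small caveat: your opening remark that compactness ``will follow from boundedness'' once closedness is known is not sound in a general metric group (closed and bounded does not imply compact there), but your final packaging — $S(A)$ closed and contained in the compact set $(A-a_0)\cup(a_0-A)$ — avoids this issue and is what actually carries the proof.
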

\begin{proof}
For $a \in A$ let $f_a^1,f_a^2 \colon A \to X$ be functions given by the formulas
$$f_a^1(b) = b - a$$
$$f_a^2(b) = a-b.$$
Both functions are continuous, so, since $A$ is compact, we have that also $f_a^1(A), f_a^2(A)$ are compact. 
Therefore,
$\bigcap_{a \in A} \left(f_a^1(A) \cup f_a^2(A) \right)$ is also compact. To finish the proof we will show that $S(A) = \bigcap_{a \in A} \left(f_a^1(A) \cup f_a^2(A) \right)$.

Note that $$a+x\in A \iff x\in f^1_a(A) \quad \text{ and } \quad a-x\in A \iff x\in f^2_a(A)$$
for all $a\in A$, $x\in X$.
It follows that
$$x\in S(A) \iff \forall_{a\in A} \left(x\in f^1_a(A) \vee x\in f^2_a(A)\right) \iff x\in \bigcap_{a \in A} \left(f_a^1(A) \cup f_a^2(A) \right),$$
which finishes the proof.
%
%
\end{proof}
In the following theorem, we do not have assumption of finiteness of the considered family which was used in the paper \cite{BBP}.
\begin{proposition}
For any family $(A_i)_{i\in I}$ we have $$\bigcap_{i \in I} S(A_i) \subset S(\bigcup_{i \in I} A_i).$$

\end{proposition}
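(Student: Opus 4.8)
The plan is to unwind the definitions directly. Let $x \in \bigcap_{i \in I} S(A_i)$; I want to show $x \in S\!\left(\bigcup_{i\in I} A_i\right)$, i.e. that for every $y \in \bigcup_{i\in I} A_i$ we have $y + x \in \bigcup_{i\in I} A_i$ or $y - x \in \bigcup_{i\in I} A_i$. So first I would fix an arbitrary $y$ in the union and pick an index $j \in I$ with $y \in A_j$.

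Next, since $x \in S(A_j)$ by the choice of $x$, the definition of the spectre applied to the set $A_j$ and the point $y \in A_j$ gives that $y + x \in A_j$ or $y - x \in A_j$. In either case the relevant point lies in $A_j \subset \bigcup_{i\in I} A_i$, so $y + x \in \bigcup_{i\in I} A_i$ or $y - x \in \bigcup_{i\in I} A_i$. Since $y$ was arbitrary, this is exactly the statement that $x \in S\!\left(\bigcup_{i\in I} A_i\right)$, completing the argument.

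There is essentially no obstacle here: the proof is a one-line set-theoretic manipulation, and the whole point of the proposition — as the remark preceding it signals — is that, unlike the corresponding fact for the center of distances, no finiteness or any other hypothesis on the family $(A_i)_{i\in I}$ is needed. The only thing to be mildly careful about is that membership $y \in \bigcup_i A_i$ only guarantees $y \in A_j$ for \emph{some} $j$, not all, but since $x$ lies in the intersection of all the $S(A_i)$ this single index suffices. I would also note in passing that the reverse inclusion fails in general (already for two singletons), so the statement cannot be upgraded to an equality.
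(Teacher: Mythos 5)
Your proof is correct and follows exactly the paper's argument: fix $y$ in the union, choose an index $j$ with $y\in A_j$, apply $x\in S(A_j)$ to get $y+x\in A_j$ or $y-x\in A_j$, and conclude via $A_j\subset\bigcup_{i\in I}A_i$. Nothing further is needed.
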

\begin{proof}
Let $x \in \bigcap_{i \in I} S(A_i)$ and $y \in  \bigcup_{i \in I} A_i$. There is $n \in I$ such that $y \in A_n$. Since $x \in S(A_n)$, so $x + y \in A_n$ or $x - y \in A_n$. Of course, $A_n \subset \bigcup_{i\in I} A_i$, therefore $x \in 
S(\bigcup_{i \in I} A_i)$.
\end{proof}
In the next result we omit the assumption of compactness of the considered sets, which was required in the analogue from \cite{BBP}.
\begin{proposition}
For any decreasing sequence $(B_i)_{i\in \N}$ of nonempty sets we have $$\bigcap_{i \in \N} S(B_i) \subset S(\bigcap_{i \in \N} B_i).$$

\end{proposition}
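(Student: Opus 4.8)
The plan is to unfold the definition of the spectre directly, just as in the proof of the preceding proposition about unions, but being slightly more careful because we now intersect infinitely many sets. Let $x \in \bigcap_{i\in\N} S(B_i)$ and set $B := \bigcap_{i\in\N} B_i$. We want to show $x \in S(B)$, that is, for every $y \in B$ we have $y+x \in B$ or $y-x\in B$. Fix such a $y$. Since $y\in B_i$ for every $i$ and $x\in S(B_i)$, for each $i$ we know that $y+x\in B_i$ or $y-x\in B_i$. The subtlety — and this is really the only point requiring thought — is that the choice of which alternative holds may a priori depend on $i$, so we cannot immediately conclude that one of $y+x$, $y-x$ lies in \emph{all} the $B_i$.

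The way around this is to use that the sequence $(B_i)$ is decreasing. Define $N_+ := \{i\in\N : y+x\in B_i\}$ and $N_- := \{i\in\N : y-x\in B_i\}$; by the above, $N_+\cup N_- = \N$, so at least one of them is infinite. Suppose $N_+$ is infinite. For any fixed $j\in\N$, pick $i\in N_+$ with $i\ge j$; then $y+x\in B_i\subset B_j$ since the sequence is decreasing. As $j$ was arbitrary, $y+x\in\bigcap_{j\in\N}B_j = B$. Symmetrically, if $N_-$ is infinite then $y-x\in B$. In either case the defining condition for $y$ is satisfied, and since $y\in B$ was arbitrary, $x\in S(B)$, completing the proof.

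The main (and essentially only) obstacle is exactly the one just resolved: the pointwise alternative ``$y+x\in B_i$ or $y-x\in B_i$'' does not by itself yield a single element of $\bigcap_i B_i$, and one must invoke the monotonicity of $(B_i)$ together with a pigeonhole-type argument to extract a cofinal subsequence on which the same alternative holds throughout. Note that no topology or compactness is used, which is precisely why the hypothesis of compactness from the analogue in \cite{BBP} can be dropped; the only structural input is that $(B_i)$ is a decreasing sequence of sets (nonemptiness is not even strictly needed for the inclusion itself, though if $B=\emptyset$ the statement is vacuous). I would present the argument in the two-case form above rather than trying to treat $N_+$ and $N_-$ simultaneously, since that keeps the decreasing-sequence step transparent.
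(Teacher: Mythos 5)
Your proof is correct and follows essentially the same route as the paper: fix $y$ in the intersection, use $x\in S(B_i)$ for each $i$, and exploit that $(B_i)$ is decreasing to force one of the two alternatives $y+x$ or $y-x$ to lie in every $B_i$. The only difference is that you spell out, via the sets $N_+$, $N_-$ and a cofinality argument, the step that the paper asserts in one line ("since the sequence is decreasing, either $x+y\in B_n$ for all $n$ or $y-x\in B_n$ for all $n$"), which is a matter of detail rather than of method.
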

\begin{proof}
Let $x \in \bigcap_{i \in \N} S(B_i)$ and $y \in  \bigcap_{i \in \N} B_i$. For any $n \in \N$, $x+y \in B_n$ or $y-x \in B_n$. Since the sequence $(B_n)$ is decreasing, we have either $x+y \in B_n$ for all $n \in \N$ or $y-x \in B_n$ for all $n \in \N$. Hence $x+y \in \bigcap_{n\in\N}B_n$ or $y-x \in \bigcap_{n\in\N}B_n$. Thus, $x \in S(\bigcap_{i \in \N} B_i).$
\end{proof}

Center of distances was used as a tool to examine achievement sets of some series in $\R$. Thus, it is natural to ask what we can say about the spectre of achievement sets. 

The first result is an easy observation (similar as for center of distances).

\begin{theorem} \label{wyrazy}
Let $\sum_i x_i$ be an absolutely convergent series. We have $x_n \in S(E(x_i))$ for all $n \in \N$.
\end{theorem}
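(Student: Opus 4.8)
The plan is to fix $n \in \N$ and an arbitrary point $y \in E(x_i)$, and exhibit $z \in E(x_i)$ with $z = y + x_n$ or $z = y - x_n$. Write $y = \sum_{i=1}^\infty \ve_i x_i$ for some $(\ve_i) \in \{0,1\}^\N$. The natural case split is on whether $n$ is ``used'' in this representation, i.e.\ whether $\ve_n = 0$ or $\ve_n = 1$.

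If $\ve_n = 0$, then define $\ve_i' = \ve_i$ for $i \neq n$ and $\ve_n' = 1$; the point $z := \sum_i \ve_i' x_i = y + x_n$ lies in $E(x_i)$, so $y + x_n \in E(x_i)$. If $\ve_n = 1$, then symmetrically set $\ve_n' = 0$ and keep the other coordinates, obtaining $z := \sum_i \ve_i' x_i = y - x_n \in E(x_i)$. In either case one of $y + x_n$, $y - x_n$ belongs to $E(x_i)$, which is exactly the defining condition for $x_n \in S(E(x_i))$. Since $y$ was arbitrary, this establishes $x_n \in S(E(x_i))$.

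There is essentially no obstacle here: the only thing to be mildly careful about is that changing a single coordinate $\ve_n$ of an absolutely convergent series again yields an absolutely convergent series with the same tail behaviour, so $z$ is genuinely a well-defined element of $E(x_i)$ — this is immediate from absolute convergence. The statement holds verbatim in any Abelian group $X$ with the analogous definition of $E$, since we never use the metric; the argument is purely a bookkeeping manipulation of the $\{0,1\}$-sequence.
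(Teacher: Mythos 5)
Your argument is correct and is essentially the paper's own proof: the authors phrase the representation as $y=\sum_{i\in B}x_i$ for a set $B\subset\N$ and either remove or add the index $n$, which is exactly your flipping of the coordinate $\ve_n$. Nothing further is needed.
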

\begin{proof}
Let $n \in \N$ and take $a \in E(x_i)$. There is $B \subset \N$ such that $a=\sum_{i \in B} x_i$. If $n \in B$, then $$a-x_n = \sum_{i \in B\setminus \{n\}} x_i \in E(x_i).$$ If $n \notin B$, then $$a+x_n =\sum_{i \in B\cup \{n\}} x_i\in E(x_i).$$ By the arbitrariness of $a$, $x_n \in S(E(x_i)).$
\end{proof}
\begin{example}
From Example \ref{e1} we know that $$S([0,1]^2)= \left(\{0\} \times [-\frac{1}{2},\frac{1}{2}]\right) \cup \left([-\frac{1}{2},\frac{1}{2}] \times \{0\}\right).$$ From Theorem \ref{wyrazy} we infer that we can obtain the square $[0,1]^2$ as an achievement set only for sequences such that all of their terms are of the form $(0,b_n)$ or $(a_n,0)$ for some sequences $(a_n)$ and $(b_n)$. Actually, we have
$$[0,1]^2 = E(x_n,y_n),$$
where $$(x_n,y_n) := \begin{cases}
(\frac{1}{2^k},0) \;&\text{ if }\; n = 2k \text{  for } k\in \N \\ 
(0,\frac{1}{2^k}) \;&\text{ if }\; n=2k-1 \text{  for } k\in \N.
\end{cases} $$
\end{example}
\begin{example}
Let $B$ be a closed ball in $\R^2.$ Let $a \in \R.$ There is a point $c$ on the boundary of $B$ such that there is $b\in \R$ for which the line $y=ax+b$ is tangent to $B$ at $c$. So, $c+(x,ax) \notin B$ for $x \neq 0$. Hence $(x,ax) \notin S(B)$ for $x \neq 0.$ Similarly, $(0,ax) \notin
S(B)$, so, by the arbitrariness of $a$, $S(B) = \{(0,0)\}$. By Theorem \ref{wyrazy}, $B$ cannot be the achievement set of any series.

\end{example}
\begin{example}
Similarly, as in the previous example we can show that any triangle $T$ cannot be the achievement set of any series in $\R^2$. Considering just the vertices of $T$, we can exclude from $S(T)$ vectors in any direction. So, $S(T) = \{(0,0)\}.$
\end{example}
During the conference "Inspirations in Real Analysis" which was held in 2022 in Będlewo Prof. Andrzej Biś asked the question what is the center of distances of the Sierpiński carpet. The following example partially answers that question (not for the center of distances, but for the spectre).
\begin{example}
Sierpiński carpet $SC$ cannot be obtained as an achievement set of any series in $\R^2$. 
First, we calculate the spectre of the set $SC$. Repeating the reasoning from Example \ref{e1}, we get $S(SC) \subset [-\frac{1}{2},\frac{1}{2}] \times \{0\} \cup \{0\} \times [-\frac{1}{2},\frac{1}{2}].$ Let $C_3$ be a classical Cantor set. Observe that $(c, \frac{1}{2}) \in SC$ if and only if $c \in C_3$. Hence it is possible that $(x,0) \in S(SC)$ only if $x \in C(C_3)$ or $x \in -C(C_3)$. It is known that $C(C_3) = \{\frac{2}{3^n}\colon n \in \N\} \cup \{0\}.$ 

Let $(x_1,y_1) \in SC$ be such that $x_1 \in (\frac{1}{9},\frac{2}{9})$, $y_1 \in (\frac{3}{9},\frac{4}{9})$. Then $(x_1,y_1) + (\frac{2}{9},0) \notin SC$. Also $(x_1,y_1) - (\frac{2}{9},0) \notin SC$, so $(\frac{2}{9},0) \notin S(SC)$. Generally, taking point $(x_n,y_n) \in SC$ such that $x_n \in (\frac{1}{3^n},\frac{2}{3^n})$, $y_1 \in (\frac{3}{3^n},\frac{4}{3^n})$, we get $(x_n,y_n) + (\frac{2}{3^n},0) \notin SC$. Also $(x_n,y_n) - (\frac{2}{3^n},0) \notin SC$, so $(\frac{2}{3^n},0) \notin S(SC)$. So $(x,0) \in S(SC) \Leftrightarrow x =0$. Similarly, we show that $(0,y) \in S(SC) \Leftrightarrow y =0$. Finally, $S(SC) = \{(0,0)\}$, so,
by Theorem \ref{wyrazy}, $SC$ cannot be the achievement set of any series.
  
\end{example}

\begin{theorem}
Let $X = \R^2$. Consider the sequence $(x_n,y_n)$ such that the series $\sum_n (x_n,y_n) = (\sum_n x_n, \sum_n y_n)$ is absolutely convergent. If $(x,y) \in S(E(x_n,y_n))$, then $x\in S(E(x_n))$ and $y \in S(E(y_n))$.
\end{theorem}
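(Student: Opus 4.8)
The plan is to take an arbitrary element $(x,y)\in S(E(x_n,y_n))$ and show that $x\in S(E(x_n))$; the claim about $y$ follows symmetrically. So fix $(x,y)\in S(E(x_n,y_n))$ and fix an arbitrary $a\in E(x_n)$; I must produce some $b$ with $a\pm x$ a subsum of $(x_n)$, choosing the sign appropriately. The natural first move is to use Proposition \ref{Cart}(ii), $pr_xE(x_n,y_n)=E(x_n)$, to lift $a$ to a point $(a,c)\in E(x_n,y_n)$ for some $c\in E(y_n)$. Applying the spectre hypothesis to this point gives that $(a+x,c+y)\in E(x_n,y_n)$ or $(a-x,c-y)\in E(x_n,y_n)$; projecting onto the $x$-axis via Proposition \ref{Cart}(ii) again yields $a+x\in E(x_n)$ or $a-x\in E(x_n)$.

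The subtlety — and the one place the argument needs care — is that the choice of sign may depend on the lift $c$, i.e. on which preimage of $a$ we pick, whereas the definition of $S(E(x_n))$ demands that for this fixed $a$ one of $a+x$, $a-x$ lies in $E(x_n)$, which is exactly what the disjunction above already gives us. In fact, re-reading the definition, we only need: for every $a\in E(x_n)$, $a+x\in E(x_n)$ or $a-x\in E(x_n)$. The displayed disjunction delivers precisely this for each fixed $a$, so no ``uniform sign'' issue actually arises, and the proof is short. I would still remark explicitly that the sign is allowed to vary with $a$, to preempt confusion.

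The only genuine content is therefore the two invocations of Proposition \ref{Cart}: first to get a point of $E(x_n,y_n)$ over a prescribed $a$, then to push the shifted point back down to the $x$-axis. I expect the write-up to be essentially: ``Let $(x,y)\in S(E(x_n,y_n))$ and $a\in E(x_n)$. By Proposition \ref{Cart}(ii) there is $c$ with $(a,c)\in E(x_n,y_n)$. Then $(a+x,c+y)\in E(x_n,y_n)$ or $(a-x,c-y)\in E(x_n,y_n)$, hence, again by Proposition \ref{Cart}(ii), $a+x\in E(x_n)$ or $a-x\in E(x_n)$. As $a$ was arbitrary, $x\in S(E(x_n))$; symmetrically $y\in S(E(y_n))$.'' I do not anticipate a real obstacle; the one thing to double-check is that the convergence hypothesis on $\sum_n(x_n,y_n)$ indeed entails absolute convergence of both $\sum_n x_n$ and $\sum_n y_n$ (it does, since $|x_n|,|y_n|\le\|(x_n,y_n)\|$), so that $E(x_n)$ and $E(y_n)$ and the cited proposition are legitimate.
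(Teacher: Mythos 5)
Your proposal is correct and follows essentially the same route as the paper: lift $a\in E(x_n)$ to a point of $E(x_n,y_n)$ (the paper does this explicitly via a representation $a=\sum_{n\in B}x_n$, which is what Proposition \ref{Cart}(ii) encodes), apply the definition of the spectre, and descend back to $E(x_n)$ (the paper cites Proposition \ref{Cart}(i) instead of (ii) for this last step, an immaterial difference). Your side remarks on the sign being allowed to depend on $a$ and on absolute convergence of the coordinate series are accurate and harmless.
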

\begin{proof}
Let $x,y \in \R$ be such that $(x,y) \in S(E(x_n,y_n))$. Let $z \in E(x_n)$. Then, there is $B \subset \N$ such that $z=\sum_{n\in B} x_n$. But then,
$$\sum_{n \in B} (x_n,y_n) = (z, t) \in E(x_n,y_n), $$ where $t=\sum_{n\in B}y_n$.
Since $(x,y) \in S(E(x_n,y_n))$, we get
$(z+x, t+y) \in E(x_n,y_n)$ or $(z-x, t-y) \in E(x_n,y_n)$. By Propositon \ref{Cart}(i), $E(x_n,y_n) \subset E(x_n) \times E(y_n)$, so 
$x+z \in E(x_n)$ or $z-x \in E(y_n)$. Hence $x \in S(E(x_n))$. Similarly, we show that $y \in S(E(y_n))$.
\end{proof}
The above theorem cannot be reversed, which the next example shows.
\begin{example}
Let $x_n = \frac{1}{4^n}$ and $y_n = \frac{1}{5^n}$ for all $n \in \N$.  Then, by Theorem \ref{wyrazy}, we have $\frac{1}{4} \in S(E(x_n))$ and $\frac{1}{25} \in S(E(y_n))$. 
However, $(\frac{1}{4}, \frac{1}{25}) \notin E(x_n,y_n)$. Indeed, the only way to obtain $\frac{1}{4}$ as a subsum of the series $\sum x_n$ is to use only the first term. Therefore, 
$\{(\frac{1}{4},y) \in \R^2\colon y \in \R\} \cap E(x_n,y_n) = (\frac{1}{4},\frac{1}{5})$. 
Of course, also $(-\frac{1}{4}, -\frac{1}{25}) \notin E(x_n,y_n)$, and since $0 \in E(x_n,y_n)$, we have $(\frac{1}{4}, \frac{1}{25}) \notin S(E(x_n,y_n)).$
\end{example}
Recall that a series $\sum x_n$ is slowly convergent if and only if for any $n \in \N$ we have
$x_n \leq r_n := \sum_{i>n} x_i$. 
In $\R$ we know that $C(E(x_n))$ is an interval if and only if $\sum x_n$ is slowly convergent \cite{BBP}.
There is no analogue for the spectre of an achievement set in $\R^2$.
\begin{example}
Consider the sequence $(\frac{1}{2^n}, \frac{1}{\sqrt{2}^n}).$ It is easy to observe that the sequences $(\frac{1}{2^n})$ and $(\frac{1}{\sqrt{2}^n})$ are slowly convergenet. For $n \in \N$ we have
$$\sum_{i=n+1}^\infty  \frac{1}{\sqrt{2}^i} = \frac{1}{\sqrt{2}^{n+1}}\cdot \frac{\sqrt{2}}{\sqrt{2}-1} =  \frac{2+\sqrt{2}}{\sqrt{2}^{n+1}} \neq \frac{1}{\sqrt{2}^{n}}.$$
By Example \ref{12Cantor}, $E( \frac{1}{2^n}, \frac{1}{\sqrt{2}^n})$ is a Cantor set.

\end{example}

\section{Open questions}
We would like to finish the paper with some open questions.

\begin{question}
    Does there exist absolutely convergent series $\sum_n x_n$, $\sum_n y_n$ such that  $E(x_n,y_n)_0$ is not a set of $P$-sums?
\end{question}







\begin{question}
    How to topologically characterize known types of achievement sets? Can it be done by cuts? Is it true that $E$ is homeomorphic to $M \times M$ if and only if every cut is finite or is an M-Cantorval?
\end{question} 

\begin{question}
    Is it true that any achievement set is either regularly closed or nowhere dense?
\end{question}

\section*{Acknowledgements}
The authors would like to thank Filip Turoboś and Paolo Leonetti for many fruitful discussions. 

This research was funded by University of Lodz from the programme IDUB, Grant number: NR 3/ML/2022.




\end{document}